\documentclass{article}
\usepackage[T1]{fontenc}
\usepackage{fancyhdr}
\usepackage{amsmath}
\usepackage{amssymb}
\usepackage{amsthm}
\usepackage{enumerate}
\usepackage{epsfig}
\usepackage{graphicx}
\usepackage{xcolor}
\usepackage{url}
\usepackage{exscale}
\usepackage{geometry}
\usepackage[textwidth=4cm,textsize=footnotesize]{todonotes}
\usepackage{fancyhdr}
\pagestyle{fancy}

\usepackage{booktabs}

\theoremstyle{plain} \newtheorem{theorem}{Theorem}[section]
\newtheorem{proposition}[theorem]{Proposition}

\newtheorem{lemma}[theorem]{Lemma}

\theoremstyle{remark}

\newcommand{\mean}{{\mathbf E}}
\newcommand{\var}{{\rm var}}

\newcounter{hypH}

\newcounter{hypCov}

\newcommand{\rmd}{\mathrm{d}}

\newcommand{\eqdef}{\ensuremath{:=}}

\newcommand{\idnonzero}{\kappa}

\newcommand{\dconv}{\stackrel{\mathcal{D}}{\rightarrow}}

\brokenpenalty=10000

\setlength{\textheight}{225mm}
\setlength{\textwidth}{6.1in}
\setlength{\topmargin}{-11mm}
\setlength{\parindent}{1pc}
\setlength{\oddsidemargin}{1.3pc}

 \def\doublespace{\baselineskip=\normalbaselineskip \multiply\baselineskip by
 3 \divide\baselineskip by 2}

\begin{document}

\doublespace

\title{Statistical Inference for Oscillation Processes}
\date{}
\author{Rainer Dahlhaus\footnote{Institute of Applied Mathematics, Heidelberg University, Germany.}, Thierry Dumont\footnote{MODAL'X, Universit\'e Paris-Ouest, Nanterre, France.}, Sylvain Le Corff\footnote{Laboratoire de Math\'ematiques d'Orsay, Univ. Paris-Sud, CNRS, Universit\'e Paris-Saclay, 91405 Orsay, France.}, Jan C. Neddermeyer\footnote{DZ BANK AG, Frankfurt, Germany.}}

\lhead{Dahlhaus et al.}
\rhead{Statistical Inference for Oscillation Processes}

\maketitle

\begin{abstract}
A new model for time series with a specific oscillation pattern is proposed. The model consists of a hidden phase process controlling the speed of polling and a nonparametric curve characterizing the pattern, leading together to a generalized state space model. Identifiability of the model is proved and a method for statistical inference based on a particle smoother and a nonparametric EM algorithm is developed. In particular, the oscillation pattern and the unobserved phase process are estimated. The proposed algorithms are computationally efficient and their performance is assessed through simulations and an application to human electrocardiogram recordings.
\end{abstract}

\medskip

\noindent
\textbf{Keywords.} Oscillation process; phase
estimation; instantaneous frequencies; oscillation pattern; Rao-Backwellized particle smoother; R\"{o}ssler attractor; electrocardiogram.

\section{Introduction} \label{sec:Introduction}
In this paper, we propose a model for the statistical analysis of oscillating time series. In its simplest form, the model is a generalized state space model (GSSM) with nonlinear observation equation
\begin{equation} \label{BasicModel}
Y_t = f(\phi_t) + \varepsilon_t, \quad t \in \mathbb{N}_+ \quad \mbox{with} \quad \varepsilon_t \stackrel{\text{iid}}{\sim} \mathcal{N}(0,\sigma_{\varepsilon}^{2})\,,
\end{equation}
where $f$ is an unknown $2\pi$-periodic function (the oscillation pattern), $\{\phi_t\}_{ t \in \mathbb{N}_+}$ is an unobserved stochastic phase process (the internal clock of the oscillator), and the $\{\varepsilon_t\}_{t \in \mathbb{N}_+}$ are independent of the process $\{\phi_t\}_{ t \in \mathbb{N}_+}$. The phase process $\{\phi_t\}_{t}$ will be an integrated process which may either slow down or speed up the cycle, for example $\phi_t$ may be modeled by the state equation
\begin{equation} \label{StateEquation1}
\Delta \phi_t = \omega +  \eta_t, \quad t \in \mathbb{N}_+  \quad \mbox{with} \quad \eta_t \stackrel{\text{iid}}{\sim} \mathcal{N}(0,\sigma_{\eta}^{2}), \quad \omega \in (0,\pi)\,,
\end{equation}
where $\Delta \phi_t := \phi_{t}-\phi_{t-1}$, and $\phi_0$ is assumed to be uniformly distributed on $[0,2\pi]$ independent of the other variables. In this case the $\Delta \phi_t$ are iid and $\phi_t = \phi_0 + \omega \,t +  S_t$ with $S_t := \sum_{s=1}^{t} \eta_s$ being a random walk with variance $\var(S_t) = \sigma_{\eta}^{2} \,t$.

It is informative to compare the above model with the model $\phi_t = \phi_0 + \omega \,t + \mathsf{R}_t$ where $\mathsf{R}_t$ is not integrated but stationary. These two models reflect two completely different situations with different type of oscillators: The latter can be used as a model where the oscillator sticks except from small deviations to an external ``pacemaker'' (say where a hormone is driven by the circadian rhythm), while the integrated phase model of this paper is e.g. a suitable model for ECG data (cf. Figure~\ref{fig:ObservationsECG} - to be analyzed in Section~\ref{sec:HumanECG}) and for the data of a R\"{o}ssler attractor (cf. Figure~\ref{fig:DataRoesslerAttractor} - to be analyzed in Section~\ref{sec:NoisyRoessler}) where one does not know in advance at which point of the cycle we  will be at a specific time point. This is reflected by a constant variance of $\phi_t$ in the stationary model and an increasing variance of $\phi_t$ in the present model (\ref{StateEquation1}).

We also mention the classical ``hidden frequency'' model of Hannan (1973) and others (for an overview see Quinn and Hannan (2001) and Li (2013)), which in its simplest form is $Y_t = A \cos (\omega t) + \varepsilon_t$ with a stationary process $\varepsilon_t$. In the notation of this paper this work dealt with the case $f(\cdot) = \cos (\cdot)$ and $\phi_t = \omega t$. There exist many papers with rather deep mathematical results on the estimation of $\omega$ in this framework - in particular via maximization of the periodogram which originally dates back to Schuster (1897). The case of a time varying hidden frequency $\omega_t$ is more in the spirit of this paper. In this case one may use the maximization of the periodogram on rolling data windows (c.f. Paraschakis and Dahlhaus, 2011).

\begin{figure}
\centering
\includegraphics[width=400pt, height=130pt]{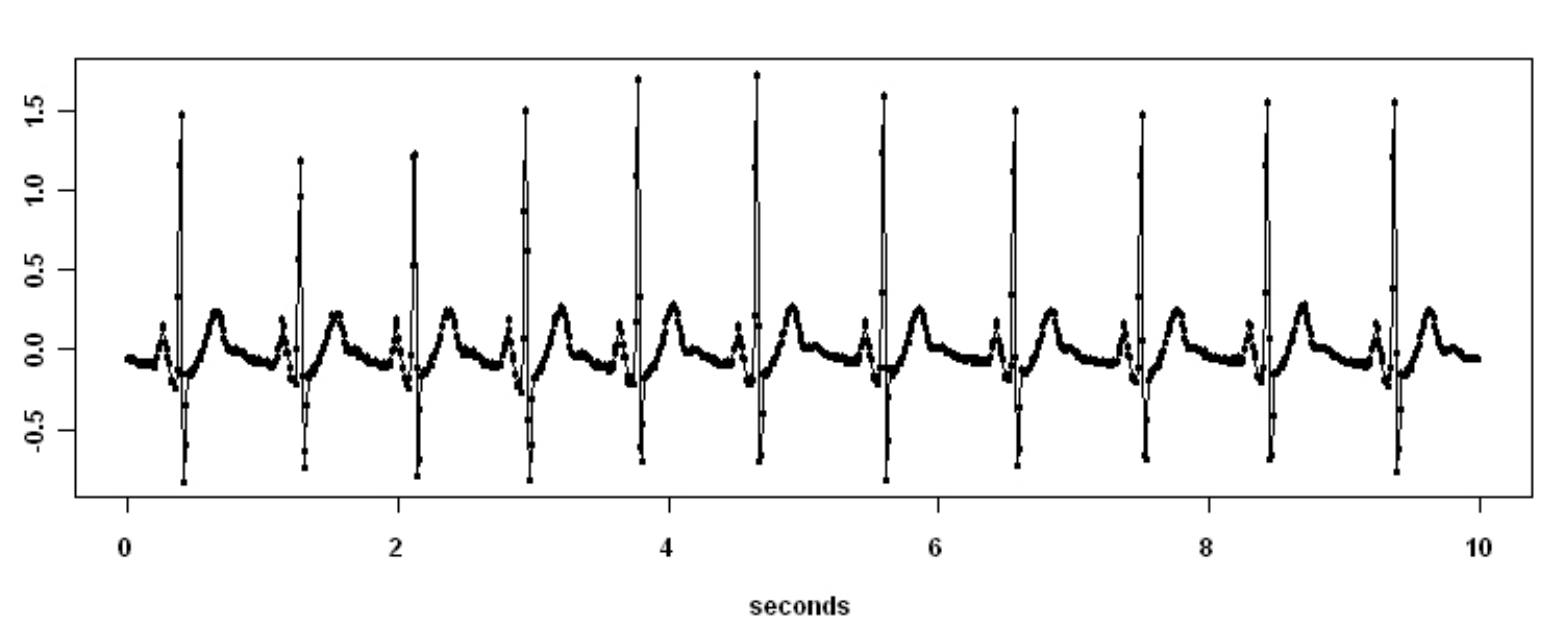}
\caption{\footnotesize 1000 Observations from an ECG}
\label{fig:ObservationsECG}
\end{figure}

\begin{figure}
\centering
\includegraphics[width=400pt, keepaspectratio]{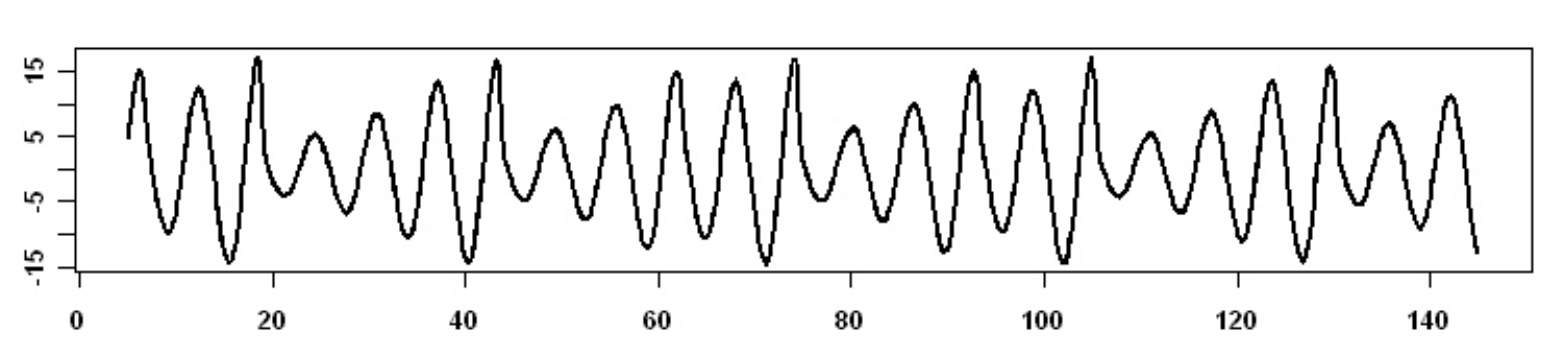}
\caption{\footnotesize 1415 Observations from a R\"{o}ssler Attractor}
\label{fig:DataRoesslerAttractor}
\end{figure}

Statistical inference for the above model is challenging since (\ref{BasicModel}) is a nonparametric regression model with unknown regressors (but, due to periodicity, with asymptotically infinite many replications). In Section~\ref{sec:Identifiability} and Section~\ref{Appendix:Identifiability}, we prove identifiability of model (\ref{BasicModel}), (\ref{StateEquation1}). Dumont and Le Corff (2014) have proved identifiability for a different nonparametric GSSM.

In Section~\ref{sec:SSM} we then show how the unknown oscillation pattern $f$ and the unobserved phase process $(\phi_t)_{\! t}$ can be estimated / predicted. In that section we consider the more general model
\begin{equation}\label{intro:model}
Y_t = a_t f(\phi_t) + b_t + \varepsilon_t\,,
\end{equation}
which includes a time varying amplitude $a_t$ and a time varying baseline $b_t$. We also allow for the more general state equation (\ref{intro:acdstateequation}) which guarantees positivity of the phase increments and includes some dependence of the phase increments. Maximum likelihood based estimation of $\phi_t$, $a_t$ and $b_t$ requires computation of the smoothing distributions, i.e. the posterior distributions of the sequence of hidden states given all observations $Y_0,\ldots, Y_T$. These distributions cannot be computed explicitly using the GSSM model of this paper, and we propose in Section~\ref{sec:SSM} an efficient (fixed-lag) Rao-Blackwellized particle smoother that combines the Kalman smoother and the fixed-lag particle smoother introduced and analyzed in Olsson et al. (2008). Estimates of $\phi_t$, $a_t$ and $b_t$ can then be obtained as means of the smoothing distributions. In Section~\ref{sec:EM_Algorithm} and \ref{sec:NonparametricEstimate}, a nonparametric EM algorithm is  developed for the estimation of the function $f$ and other parameters. For a recent overview of sequential Monte Carlo methods see Douc et al. (2014).

As a by-product we also obtain a method for nonlinear phase estimation in the case where $f(x) = \cos(x)$ is known. Such an estimator is needed in several applications - for example for the detection of phase synchronization of chaotic oscillators (cf. Pikovsky et.al., 2001); in neuroscience for the investigation of functional coupling of different brain regions (cf. Fell and Axmacher, 2011), and in engineering for channel decoding (Chen et.al., 2003). Furthermore, the estimation of instantaneous frequencies is a key step in the widely used empirical mode decomposition (cf. Huang et.al., 1998).

Traditional approaches for phase estimation are based on the Hilbert transform (cf. Pikovsky et.al., 2001) or the Wavelet transform (c.f. Grossmann et.al., 1989). In practice, these methods often fail not only in situations of fast varying frequencies but also in cases where the
signal is corrupted by noise. In such a situation statistical methods such as variants of Hannan's hidden frequency approach or the method of this paper are beneficial. Phase estimation with a GSSM has been used in Tsakonas et.al. (2008), in the context of communication systems in Amblard (2003), and in the context of audio processing in Dubois and Davy (2007) - in all cases in combination with a different  model for the state and only for $f(x) = \cos(x)$ (i.e. in particular under the assumption that $f$ is known).

As mentioned above, Section~\ref{sec:Identifiability} contains identifiability and Section~\ref{sec:SSM} statistical inference in the framework of a GSSM. Nonlinear phase estimation in the special case $f(\cdot) = \cos(\cdot)$ is explored in a simulation study in Section~\ref{sec:SimulatedData} and for the nonlinear R\"{o}ssler-attractor in Section~\ref{sec:NoisyRoessler} followed by an application to ECG data in Section~\ref{sec:HumanECG}. Section~\ref{sec:Conclusion} contains concluding remarks and the appendices in Section~\ref{Appendix:Identifiability} and \ref{sec:Appendix2} the proofs.

\section{Identifiability of Oscillation Processes} \label{sec:Identifiability}

Since both the function $f$ and the phases $\phi_t$ are unobserved, identifiability is a nontrivial  issue discussed in this section. One comment beforehand: as usual identifiability means identifiability of $f$ and the parameters of the process $\phi_t$ and not ``identifiability'' of $\phi_t$. The latter is not possible - the best we can achieve is e.g. the determination of the best predictor of $\phi_t$ given the observations. We start with a heuristic discussion of the identifiability problems:

\begin{enumerate}[1)]
\itemsep-0.05cm
\item \label{enumerate1:shifting} \underline{Shifting the oscillation pattern}:\; it is obvious that the starting point of the oscillation pattern cannot be identified. Formally,
\begin{equation*} \label{}
f(\phi_t) = \tilde{f} (\tilde{\phi}_t) \quad \mbox{with} \quad \tilde{f} (x) = f(x - \theta) \quad \mbox{and} \quad \tilde{\phi}_t = \phi_t + \theta\,.
\end{equation*}
Sometimes there exists a natural starting point known from the applied problem at hand - in other cases one can just define the starting point arbitrarily.

\item \label{enumerate1:omega} \underline{Non-identifiability of the mean phase increment}:\;
the mean phase increment $\omega$ can only be identified from the data under additional assumptions. For example, for the model (\ref{StateEquation1}) where $\phi_t = \phi_0 + \omega \,t + \sigma \, S_t$, we have
\begin{equation*} \label{}
f(\phi_t) = f(\phi_0 + \omega \,t + \sigma \, S_t) = f(\phi_0 + (\omega + 2 \pi \ell) \,t + \sigma \, S_t)\,,
\end{equation*}
i.e. $\omega$ is only identifiable $\mbox{mod}\;2\pi$. Furthermore, if $f$ is symmetric then
\begin{equation*} \label{}
f(\phi_t) = f(-\phi_t) = f(-\phi_0 - \omega \,t - \sigma \, S_t) = f(-\phi_0 + (2\pi -\omega) \,t - \sigma \, S_t)\,,
\end{equation*}
i.e. $\omega$ and $2\pi - \omega$ can only be identified under the additional assumption that $f$ is not symmetric or the distribution of  $\eta_t$ is not symmetric. Since we are mainly interested in the case where each cycle contains several data points (corresponding to a small $\omega$), we assume throughout this paper that  $\omega \in (0,\pi)$ guaranteeing identifiability.

Note that the above discussion is related to the Nyquist frequency and aliasing: If we regard the original signal as continuous in time with phase process $\phi_t = \phi_0 + \omega \,t + \sigma_{\eta} \, B(t)$ where $B(t)$ denotes a Brownian motion, and we sample the process at times $\Delta t$, then the same arguments as above show that we can only identify $\omega \in \big(0, \pi/\Delta\big)$ with aliases at $2\pi /\Delta - \omega$ and all frequencies shifted by $2 \pi \ell / \Delta, \ell \in \mathbb{Z}$.

\item \label{enumerate1:basiccycle} \underline{Minimal period length / the basic cycle}:\; let
\begin{equation} \label{DefinitionReplication}
\textnormal{\small{repl}}(f) \eqdef \sup \Big\{ r \in \mathbb{R} \, \Big| \,f\Big(\frac {\cdot} {r}\Big) \mbox{ is $2\pi$ periodic} \Big\}
\end{equation}
be the number of periodic replications in $f$. Then $\tilde{f} (\cdot) := f\big(\frac {\cdot} {\textnormal{\scriptsize{repl}}(f)}\big)$ is called the \underline{basic cycle} of the oscillation. In Theorem~\ref{TheoremIdentifiability2}, we prove that $\mbox{\small{repl}}(f) \!\in\! \mathbb{N}_+$ and that the basic cycle is unique up to the shift of the starting point. Thus, if we fix the starting point $\theta_{\! f}$ in the basic cycle we have
\begin{equation} \label{DefinitionBasicCycle}
f_{\mbox{\scriptsize{basic}}} (x) : = f\Big(\frac {x- \theta_{\!f}} {\textnormal{\scriptsize{repl}}(f)}\Big)
\end{equation}
with a unique $f_{\mbox{\scriptsize{basic}}}$. If we have two representations with different $f_1$ and $f_2$ then
\begin{equation*} \label{}
f_1\Big(\frac {x - \theta_{1}} {\textnormal{\small{repl}}(f_1)}\Big) = f_{\mbox{\scriptsize{basic}}} (x) = f_2\Big(\frac {x - \theta_{2}} {\textnormal{\small{repl}}(f_2)}\Big)\quad \mbox{i.e.}\quad   f_1(x) = f_2(\frac {x-\theta} {\gamma})\,,
\end{equation*}
with $\gamma \!=\! \textnormal{\small{repl}}(f_2) / \textnormal{\small{repl}}(f_1)$ and $\theta \!=\! (\theta_2 \!-\! \theta_1) / \textnormal{\small{repl}}(f_1)\;$ (see Theorem~\ref{TheoremIdentifiability} and \ref{TheoremIdentifiability2} below).

It is important to note how the phases transform when moving from the oscillation pattern $f_1$ to $f_2$. If $\{\phi_t^{(1)}\}$ fulfills model (\ref{StateEquation1}) we have
\begin{equation*} \label{}
f_1(\phi_t^{(1)}) = f_2 (\phi_t^{(2)}) \quad \mbox{with}\quad   \phi_t^{(2)} = \frac {\phi_t^{(1)}-\theta} {\gamma} \quad \mbox{i.e.} \quad  \Delta \phi_t^{(2)} = \frac {\Delta \phi_t^{(1)}} {\gamma} = \frac {\omega_1} {\gamma} + \frac {\sigma_1} {\gamma} \,  \eta_t\,.
\end{equation*}
Thus  $\{\phi_t^{(2)}\}$ also fulfills model (\ref{StateEquation1}) with $\gamma \omega_2 \!=\! \omega_1$ and $\gamma \sigma_2 \!=\! \sigma_1$ (see also Theorem~\ref{TheoremIdentifiability}). For identifiability reasons, we usually assume that the period length is the minimal one, i.e. we use the basic cycle as our oscillation pattern.

In practice, the discrimination between the basic cycle and multiple replications is often clear from eye-inspection (as in Figure~\ref{fig:ObservationsECG}). One can incorporate this external information into the EM algorithm from Section~\ref{sec:NonparametricEstimate} in a quite elegant way: as demonstrated above the information on the multiplicity of the cycle (say $f(x) \!=\! f_{\mbox{\scriptsize{basic}}}(rx)$ with $r \! \in \! \mathbb{N}_+$) is also contained in the drift-parameter which becomes $\omega \!=\! \omega_{\mbox{\scriptsize{basic}}} / r$. Incorporating external information can then be achieved by choosing an appropriate starting value for $\omega$ in the algorithm (heuristically the EM algorithm then finds that local maximum which corresponds to the basic cycle). In practice, we may count the number of basic cycles in the data, multiply it by $2\pi$ and divide it by the number of time points leading to a rough estimate of $\omega$ which we use as the initial value. The ECG-example in Section~\ref{sec:HumanECG} shows that this works remarkably well - even with the uninformative starting value $f^{(0)}\equiv 0$. The starting value for $\omega$ then allows for a reasonable simulation of $\phi_t$ (the particles) leading to improved estimates of $f$ in the next iteration steps.

\item \label{enumerate1:timewarping} \underline{Time-warping}:\; non-identifiability due to time-warping seems to be a serious problem. Time warping means a transformation of the observation model of the form
\[
y_t =  (f \circ g^{-1})\big( g(\phi_t \; \mbox{mod}\;2\pi)\big) + \varepsilon_t\,,
\]
with an increasing function $g\!:\! [0,2\pi] \!\rightarrow \![0,2\pi]$ leading to the new oscillation pattern $\tilde{f} = f \circ g^{-1}$ and the new phases $\tilde{\phi}_t = g(\phi_t\; \mbox{mod}\;2\pi)$. The problem of time-warping has been discussed in nonparametric regression in a large number of papers (cf. Kneip and Gasser, 1992; Wang and Gasser, 1997).

Luckily, the present model rules out time warping in a very natural way: if $\phi_0 \sim U[0,2 \pi]$ and the increments $\Delta \phi_t$ are independent of $\phi_0$ then $\phi_t \; \mbox{mod}\;2\pi \;  \sim U[0,2 \pi]$. However, this is only true for $\tilde{\phi}_t$ if $g(x)=x$, i.e. time warping is automatically ruled out. If we abstain from the assumption $\phi_0 \sim U[0,2 \pi]$, then the assumption of stationarity of the increments implies that $\phi_t \; \mbox{mod}\;2\pi \;  \dconv U[0,2 \pi]$, meaning that the assumption of stationarity of the increments prevents time-warping.

    \newcounter{enumTemp}
    \setcounter{enumTemp}{\theenumi}
\end{enumerate}

\noindent In the more complicated model $y_t = a_t f(\phi_t) + b_t + \varepsilon_t$ with  amplitude $a_t$ and baseline $b_t$ (modeled by stochastic processes) there arise two additional identifiability issues.

\begin{enumerate} [1)]
\itemsep-0.05cm
\setcounter{enumi}{\theenumTemp}
\item \label{enumerate1:amplitude} \underline{Amplitude of $f$}:\; in case of a time varying amplitude $a_t$, we remove non-identifiability by assuming $\mean a_t = 1$, i.e. $a_t$ measures the relative deviation of the amplitude over time; in case of a known oscillation pattern (e.g. $f(x) = \cos (x)$), we make no assumption on $\mean a_t$.

\item \label{enumerate1:baseline} \underline{Level of $f$}:\; in case of a time varying baseline $b_t$, we remove non-identifiability by assuming $\mean b_t = 0$ i.e. $b_t$ measures the deviation of the baseline over time; in case of a known oscillation pattern, we make no assumption on $\mean b_t$.
\end{enumerate}

\setlength{\textheight}{220mm}
\setlength{\textwidth}{6.1in}
\setlength{\topmargin}{-10mm}

\noindent We now prove identifiability in a strict sense of model (\ref{BasicModel}), (\ref{StateEquation1}). In addition to the assumptions stated in Section~\ref{sec:Introduction} we assume that $f$ is a $2\pi$ periodic function with $f \in \mathcal{F}$, where
\begin{equation*} \label{}
\mathcal{F} \eqdef \bigg\{f:\mathbb{R}\to\mathbb{R}\; \Big|\; f(x) = \sum_{k\in\mathbb{Z}}c_k \mathrm{e}^{ikx} \quad  \mbox{with}  \quad \{c_k\} \in\ell_2(\mathbb{Z})\; \bigg\}\,.
\end{equation*}

\begin{theorem} \label{TheoremIdentifiability}
Let $f_{\star}\in\mathcal{F}$ be a non-constant oscillation pattern and $\omega_\star \!\in\! (0,\pi),\sigma_{\varepsilon \star}^{2},\sigma_{\eta \star}^{2}$ be the parameter set of the state space model (\ref{BasicModel}), (\ref{StateEquation1}).
Assume that there exist $f \!\in \!\mathcal{F}$ and $\omega \!\in\! (0,\pi)$, $\sigma_{\varepsilon}^{2}$, $\sigma_{\eta}^{2}$ such that the finite dimensional distributions of $\{Y_k\}_{k \in \mathbb{N}_+}$ given by the two parameter sets are the same. Under the additional assumption that $\omega_{\star}\notin \pi\mathbb{Q}$, there exists a
$\,\gamma\in\mathbb{Q}$ and a $\,\theta\in[0,2\pi)$ such that $\sigma_{\varepsilon} = \sigma_{\varepsilon \star}$, $\gamma \sigma_{\eta} = \sigma_{\eta \star}$, $\gamma \omega = \omega_{\star}$, and $f(\frac {x-\theta} {\gamma}) = f_{\star}(x)$ for all $x\in\mathbb{R}$.
\end{theorem}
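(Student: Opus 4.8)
The plan is to read off the parameters from the finite‑dimensional laws in three stages: two preliminary reductions, a second‑order (autocovariance) analysis that produces $\gamma$ together with the ``coarse'' relations, and a higher‑order (cumulant) analysis that fixes the remaining discrete ambiguities and produces the shift $\theta$. Write $f(x)=\sum_k c_k\mathrm e^{ikx}$, $f_\star(x)=\sum_k c_k^\star\mathrm e^{ikx}$ (so $c_{-k}=\overline{c_k}$ since $f$ is real) and $\phi_t=\phi_0+\omega t+S_t$ with $S_t=\sum_{s\le t}\eta_s$. Two remarks drive everything. (i) Since $\phi_0\sim U[0,2\pi]$ is independent of the increments, $\mathbb E\bigl[\mathrm e^{\,i\sum_a k_a\phi_{t_a}}\bigr]$ vanishes unless $\sum_a k_a=0$, in which case it equals $\mathrm e^{\,i\omega\sum_a k_a t_a}\exp\!\bigl(-\tfrac{\sigma_\eta^2}{2}\sum_{s}(\sum_{a:t_a\ge s}k_a)^2\bigr)$: only \emph{balanced} frequency combinations survive in any moment of $\{f(\phi_t)\}$. (ii) The characteristic function of a Gaussian vector never vanishes, so the joint law of the noiseless signal $Z_t:=f(\phi_t)$ is recovered from that of $\{Y_t\}$ by dividing out the factor $\exp(-\tfrac{\sigma_\varepsilon^2}{2}\|u\|^2)$; hence the laws of $\{f(\phi_t)\}$ and $\{f_\star(\phi_t^\star)\}$ agree as well.

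\textbf{Second order.} From $\mathbb E[Y_t]=c_0$ we get $c_0=c_0^\star$, and for $h\ge1$
\[
\cov(Y_t,Y_{t+h})=2\sum_{k\ge1}|c_k|^2\cos(\omega k h)\,\mathrm e^{-k^2\sigma_\eta^2 h/2}=\sum_{k\ge1}|c_k|^2\bigl(z_k^h+\overline{z_k}^h\bigr),\qquad z_k:=\mathrm e^{-k^2\sigma_\eta^2/2+i\omega k},
\]
while $\var(Y_t)=2\sum_{k\ge1}|c_k|^2+\sigma_\varepsilon^2$ (finite since $\{c_k\}\in\ell_2$). The moduli $|z_k|=\mathrm e^{-k^2\sigma_\eta^2/2}$ are strictly decreasing in $k$, and $\omega_\star\notin\pi\mathbb Q$ forces $\omega_\star k\notin\pi\mathbb Z$ for every $k\ge1$, hence $z_k^\star\neq\overline{z_k^\star}$; so the ``nodes'' $\{z_k^\star,\overline{z_k^\star}:c_k^\star\neq0\}$ are pairwise distinct numbers in the open unit disc. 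Now invoke uniqueness of generalized Dirichlet/Prony expansions: a sequence $(r(h))_{h\ge1}$ has at most one representation as an absolutely convergent sum $\sum_j a_j\xi_j^h$ with nonzero $a_j$ and distinct $\xi_j$ in the unit disc (peel off the slowest‑decaying modulus level, which the $|z_k|$ identify with a single $k$, recover its two conjugate nodes and coefficient, and recurse). Matching the two expansions of $\cov(Y_\cdot,Y_\cdot)$ yields a bijection $\beta$ between the Fourier supports with
\[
k^2\sigma_\eta^2=\beta(k)^2\sigma_{\eta\star}^2,\qquad |c_k|=|c_{\beta(k)}^\star|,\qquad \omega k\equiv\pm\,\omega_\star\beta(k)\pmod{2\pi}.
\]
The first relation makes $\gamma:=k/\beta(k)=\sigma_{\eta\star}/\sigma_\eta$ independent of $k$, hence a ratio of positive integers, so $\gamma\in\mathbb Q$ and $\gamma\sigma_\eta=\sigma_{\eta\star}$; the second, summed and compared with $\var(Y_t)=\var(Y_t^\star)$, gives $\sigma_\varepsilon=\sigma_{\varepsilon\star}$; and $\beta(k)=k/\gamma$, i.e.\ the support of $\widehat{f_\star}$ is $\gamma^{-1}$ times that of $\widehat f$.

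\textbf{Higher order (the crux).} The congruences above leave the sign, a $2\pi$‑ambiguity in $\omega$, and all phases $\arg c_k$ undetermined, so one turns to the higher joint moments of $\{Z_t\}$. Expanding $\mathbb E\bigl[\prod_a Z_{t_a}^{\,n_a}\bigr]$ and using the balance rule (i), each such moment is, as a function of the time lags, an absolutely convergent exponential sum whose nodes are built from $\mathrm e^{-k^2\sigma_\eta^2/2}$ and $\mathrm e^{i\omega k}$ and whose coefficients are the balanced products $\prod_r c_{k_r}$ (with $\sum_r k_r=0$). The Prony uniqueness argument again lets one read these off and match them with the starred model; because $\omega_\star/\pi$ is irrational, the starred exponents and phases satisfy no nontrivial resonance, so the matching is rigid. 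Concretely: the common sign in the second‑order congruence must be $+$, and together with $\omega,\omega_\star\in(0,\pi)$ this upgrades $\omega k\equiv\omega_\star k/\gamma$ to the equality $\gamma\omega=\omega_\star$; and the phase defect $\rho_j:=\arg c_{\gamma j}-\arg c_j^\star$ is constrained, by the matched balanced products, to satisfy $\sum_r\rho_{j_r}\equiv0\pmod{2\pi}$ over all balanced tuples $(j_r)$ in the support, which with the magnitude identities forces $\rho_j\equiv j\theta\pmod{2\pi}$ for a single $\theta\in[0,2\pi)$. Equivalently $c_j^\star=c_{\gamma j}\mathrm e^{-ij\theta}$ for all $j$, i.e.\ $f_\star(x)=\sum_j c_{\gamma j}\mathrm e^{ij(x-\theta)}=f\bigl(\tfrac{x-\theta}{\gamma}\bigr)$, which is the assertion.

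\textbf{Main obstacle.} I expect the higher‑order stage to be where the real work lies, for two reasons. First, the order of moment that actually carries phase information is unbounded over $\mathcal F$: if $\widehat f$ is supported on $\{\pm a,\pm b\}$ with $\gcd(a,b)=1$, no balanced tuple of length $<a+b$ sees a nontrivial phase, so the combinatorics of balanced frequency tuples must be organised uniformly in $f$. Second, converting the various ``mod $2\pi$'' matchings into the exact statements $\gamma\omega=\omega_\star$ and ``a single shift $\theta$'' is exactly where $\omega_\star\notin\pi\mathbb Q$ is indispensable: it is the arithmetic rigidity of the irrational rotation that excludes resonant coincidences — including the reflection/``symmetric pattern'' phenomena of the sort discussed heuristically after \eqref{GSSM-Identifiability} — which would otherwise permit a genuinely different parameter set. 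By contrast, the second‑order stage is essentially bookkeeping once the Prony uniqueness lemma is available.
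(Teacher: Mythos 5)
Your overall architecture coincides with the paper's: at second order the autocovariance $2\sum_{k\ge1}|c_k|^2\cos(k\ell\omega)\,\mathrm{e}^{-\ell k^2\sigma_\eta^2/2}$ is treated as an exponential sum whose strictly ordered decay levels are peeled off one at a time (the paper does the peeling by a $\limsup$-in-$\ell$ recursion rather than by quoting a Prony/Dirichlet uniqueness lemma), yielding $\kappa_i(f)=\gamma\,\kappa_i(f_\star)$, $\gamma\sigma_\eta=\sigma_{\eta\star}$, $|c_{\kappa_i}(f)|=|c_{\kappa_i(f_\star)}(f_\star)|$, $\sigma_\varepsilon=\sigma_{\varepsilon\star}$ and congruences for $\omega$; then higher-order moments identify the balanced products $c_{-(k_1+\cdots+k_p)}c_{k_1}\cdots c_{k_p}$ (your ``multivariate Prony'' step is exactly the paper's Lemma~\ref{lem:psi:multiple:invert}), and a Bezout combination over the setwise-coprime support produces the single shift $\theta$. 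So the route is essentially the paper's, modulo packaging.

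There is, however, one genuine gap: the determination of $\gamma\omega=\omega_\star$ is asserted rather than derived, and the justification you give would not work as stated. From the second-order matching you only have, for each $i$, $\omega\kappa_i\equiv\pm\,\omega_\star\kappa_i^\star\pmod{2\pi}$, and for $\kappa_i\ge 3$ there are several distinct values of $\omega$ in $(0,\pi)$ compatible with one such congruence, so ``the common sign must be $+$, and together with $\omega,\omega_\star\in(0,\pi)$ this upgrades the congruence to equality'' is not a proof. Two specific ingredients are needed (and are supplied in the paper, already at second order, without any higher moments): (a) the sign must be the \emph{same} for every $i$, which is precisely where $\omega_\star\notin\pi\mathbb{Q}$ enters — mixed signs at two indices would force $\omega_\star\in\pi\mathbb{Q}$; and (b) after the gcd reduction of Proposition~\ref{prop:coprime}, the Bezout relation $\sum_i a_i\kappa_i^\star=1$ of (\ref{eq:sumcoprime}) combines the whole family of congruences into $\omega\mp\omega_\star\in2\pi\mathbb{Z}$, and only then does $\omega,\omega_\star\in(0,\pi)$ both finish the ``$-$'' case and exclude the ``$+$'' case. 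You use Bezout-type reasoning for the phase defects $\rho_j$, but not for $\omega$, and without it this step fails. A second, minor point: your preliminary remark (ii) — dividing out the Gaussian factor to conclude that the laws of $\{f(\phi_t)\}$ and $\{f_\star(\phi_t^\star)\}$ coincide — presupposes $\sigma_\varepsilon=\sigma_{\varepsilon\star}$, which you only establish in the second-order stage; it is not automatic beforehand, since an $L^2$ pattern can absorb Gaussianity (with $f=\Phi^{-1}(\cdot/2\pi)$ the marginal $f(\phi_t\bmod 2\pi)$ is exactly Gaussian). Reordering as in the paper (second-order identities first, then the moment comparison for the signal, using that the Gaussian noise moments are determined by $\sigma_\varepsilon^2$) repairs this.
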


The proof is given in Section~\ref{Appendix:Identifiability}. We just mention here that we can identify $\sigma_{\varepsilon}$, $\sigma_{\eta}$, $\omega$, and the squared Fourier-coefficients of $f$ from the second order structure of $\{Y_k\}$ while for the identification of $f$ higher order moments are needed. The assumption $\omega_{\star}\notin \pi\mathbb{Q}$ is dispensable in our opinion but we were unable to prove the result without it. In order to gain a deeper insight into the situation and to explain the constant $\gamma$ we define
\begin{equation} \label{DefFourierCoefficient}
c_k(f) \eqdef \frac{1}{2\pi}\int_{-\pi}^{\pi}f(x)\, \mathrm{e}^{-ikx}\,\rmd x\;
\end{equation}
and the sequence $\{\idnonzero_i (f)\}_{i\ge 1}$ by
\begin{equation} \label{DefKappas}
\idnonzero_1(f) = \inf \left\{k\ge 1; c_k(f)\neq 0\right\} \quad\mbox{and}\quad\idnonzero_{i+1}(f) = \inf \left\{k\ge \idnonzero_i(f)+1; c_k(f)\neq 0\right\}\;.
\end{equation}

\begin{theorem} \label{TheoremIdentifiability2}
Assume that there exist an oscillation pattern $f$ and $\omega \!\in\! (0,\pi)$, $\sigma_{\varepsilon}^{2}$, $\sigma_{\eta}^{2}$ such that the conditions of Theorem~\ref{TheoremIdentifiability} hold. Then, the basic cycle $f_{\mbox{\scriptsize{basic}}} (\cdot)$ defined in (\ref{DefinitionBasicCycle}) is unique and every oscillation pattern is an $\ell$-times replication of the basic cycle with $\ell \in \mathbb{N}_+$. Thus also $\mbox{\small{repl}}(f) \in \mathbb{N}_0$. Furthermore,
\begin{equation} \label{DefinitionReplication2}
\textnormal{\small{repl}}(f) =  \max \big\{ \ell \in \mathbb{N}_+ \,\big| \,c_k(f) = 0 \;\; \forall \; k \neq \ell\mathbb{N}_+ \big\}\,,
\end{equation}
and we have for $\gamma$ from Theorem~\ref{TheoremIdentifiability}
\begin{equation} \label{RelationGamma2}
\gamma = \frac {\textnormal{\small{repl}}(f)} {\textnormal{\small{repl}}(f_\star)} \qquad \mbox{and}\qquad \gamma = \frac {\idnonzero_i(f)} {\idnonzero_i(f_\star)} \;\; \mbox{for all}  \;i \in \mathbb{N}_+\,.
\end{equation}
In addition, the $\{\idnonzero_i(f_{\mbox{\scriptsize{basic}}})\}_{i \in \mathbb{N}_+}$ are setwise coprime.
\end{theorem}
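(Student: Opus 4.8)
The plan is to reduce the whole statement to arithmetic of Fourier supports and then to feed in Theorem~\ref{TheoremIdentifiability}. Throughout, for a non-constant $g\in\mathcal{F}$ set $K(g):=\{k\ge 1:c_k(g)\neq 0\}$ with $c_k$ as in (\ref{DefFourierCoefficient}); since $g$ is non-constant, $K(g)\neq\emptyset$. First I would prove
\[
\mathrm{repl}(g)=\gcd K(g)=\gcd\{\idnonzero_i(g)\}_{i\ge 1}\in\mathbb{N}_+ .
\]
Indeed $g(\cdot/r)$ is $2\pi$-periodic iff $g$ is $(2\pi/r)$-periodic iff $\mathrm{e}^{2\pi i k/r}=1$ for every $k$ with $c_k(g)\neq 0$, i.e. iff $k/r\in\mathbb{Z}$ for all $k\in K(g)$; since $K(g)\neq\emptyset$ this forces $r\in\mathbb{Q}_{>0}$, and writing $r=p/q$ in lowest terms one gets $p\mid\gcd K(g)$, hence $r\le\gcd K(g)$, with equality attained at $r=\gcd K(g)$ (the gcd of the possibly infinite set $K(g)$ is well defined as the eventually constant value of the gcds of its finite initial segments). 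Formula (\ref{DefinitionReplication2}) then drops out, because ``$c_k(g)=0$ for all $k\notin\ell\mathbb{Z}$'' is the same as $\ell\mid\gcd K(g)$, whose largest solution is $\ell=\mathrm{repl}(g)$.

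Next I would show the basic cycle is primitive. With $d=\mathrm{repl}(f)$, a one-line Fourier computation gives $c_j(f_{\mathrm{basic}})=c_{jd}(f)\,\mathrm{e}^{-ij\theta_f}$, so $K(f_{\mathrm{basic}})=\{k/d:k\in K(f)\}$ and therefore $\gcd K(f_{\mathrm{basic}})=\gcd K(f)/d=1$, i.e. $\mathrm{repl}(f_{\mathrm{basic}})=1$; applying the displayed identity to $f_{\mathrm{basic}}$, this is exactly the setwise coprimality of $\{\idnonzero_i(f_{\mathrm{basic}})\}_{i\ge 1}$. Since $f(x)=f_{\mathrm{basic}}(dx+\theta_f)$ by the very definition of $f_{\mathrm{basic}}$ in item~\ref{enumerate1:basiccycle}) and $f_{\mathrm{basic}}$ is primitive, this simultaneously exhibits $f$ as the $d$-fold replication of its basic cycle, with $d=\mathrm{repl}(f)\in\mathbb{N}_+$.

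It remains to import Theorem~\ref{TheoremIdentifiability}, which under the present hypotheses furnishes $\gamma\in\mathbb{Q}_{>0}$ and $\theta\in[0,2\pi)$ with $f((x-\theta)/\gamma)=f_\star(x)$. Writing $\gamma=p/q$ in lowest terms, both $x\mapsto f((x-\theta)/\gamma)$ and $f_\star$ are $2\pi p$-periodic, so I would expand each in $\{\mathrm{e}^{ijx/p}\}_{j\in\mathbb{Z}}$ and match coefficients, obtaining $p\,K(f_\star)=q\,K(f)$ together with $c_j(f_\star)=c_{jp/q}(f)\,\mathrm{e}^{-ij\theta}$. From $\gcd(p,q)=1$ one reads off $p\mid k$ on $K(f)$, $q\mid k$ on $K(f_\star)$, and that $k\mapsto(q/p)k$ is an order-preserving bijection $K(f)\to K(f_\star)$; comparing gcds and smallest positive elements through this bijection, and using the Fourier description of $\mathrm{repl}$ above, gives $p\,\mathrm{repl}(f_\star)=q\,\mathrm{repl}(f)$ and $p\,\idnonzero_i(f_\star)=q\,\idnonzero_i(f)$ for all $i$, which is (\ref{RelationGamma2}). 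Finally, substituting $f_\star(x)=f((x-\theta)/\gamma)$ into $f_{\star,\mathrm{basic}}(x)=f_\star((x-\theta_{f_\star})/\mathrm{repl}(f_\star))$ and using $\mathrm{repl}(f_\star)\,\gamma=\mathrm{repl}(f)$ collapses the right-hand side to $f_{\mathrm{basic}}(x-c)$ for a constant $c$, so the basic cycle extracted from any oscillation pattern compatible with the observations equals $f_{\star,\mathrm{basic}}$ up to the inevitable shift of the starting point; this is the asserted uniqueness. (An arbitrary compatible pattern may legitimately play the role of $f_\star$ here, since $\gamma\in\mathbb{Q}$ and $\omega_\star\notin\pi\mathbb{Q}$ force $\omega=\omega_\star/\gamma\notin\pi\mathbb{Q}$.)

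Most of the work is bookkeeping. The one step where I expect to need genuine care is the transport step, where the relation $f(\cdot/\gamma)=f_\star$ has to be exploited for a truly non-integer rational $\gamma$ and for functions that live only in $L^2$; the device that keeps this clean is to pass to the common period $2\pi p$ and argue there by uniqueness of Fourier coefficients, rather than manipulating the functions pointwise, and to check that the various ``gcd of an infinite set'' manipulations used above are legitimate.
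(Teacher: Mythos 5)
Your proof is correct, and it reaches the conclusions by a route that differs in a meaningful way from the paper's. The paper proves Theorem~\ref{TheoremIdentifiability2} by re-entering the statistical identification machinery: it introduces $\nu(f_\star)$ as in (\ref{DefinitionReplication3}), passes to the reduced ("primitive") patterns $\bar f$, $\bar f_\star$ whose $\idnonzero_i$'s are setwise coprime (in the spirit of Proposition~\ref{prop:coprime}), and then re-runs steps 2) and 4) of the proof of Theorem~\ref{TheoremIdentifiability} for these reduced patterns to force $\gamma=1$ there, which yields $\bar f(x-\theta)=\bar f_\star(x)$ and hence uniqueness of the basic cycle and (\ref{RelationGamma2}); note also that the second identity in (\ref{RelationGamma2}) is in fact established inside the paper's proof of Theorem~\ref{TheoremIdentifiability} itself. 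You instead treat the whole theorem as a deterministic corollary of the stated conclusion $f((x-\theta)/\gamma)=f_\star(x)$ with $\gamma\in\mathbb{Q}$: your key lemma $\mathrm{repl}(g)=\gcd K(g)=\gcd\{\idnonzero_i(g)\}$ (made explicit, where the paper only proves $\mathrm{repl}\ge\nu$ and gets equality as a by-product), the computation $c_j(f_{\mathrm{basic}})=c_{jd}(f)\mathrm{e}^{-ij\theta_f}$ giving primitivity of the basic cycle, and the transport of Fourier supports under the rational dilation by working at the common period $2\pi p$ and matching coefficients. What your approach buys is self-containedness — no re-use of the probabilistic limsup/coprimality argument, no appeal to Proposition~\ref{prop:coprime}, and a clean standalone characterization of $\mathrm{repl}$ — while the paper's approach stays closer to its normalization-by-coprimality framework and obtains the shift identification of the basic cycles directly from the already-proved step 4). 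Your parenthetical about swapping the roles of $f$ and $f_\star$ (using $\omega=\omega_\star/\gamma\notin\pi\mathbb{Q}$) is correct but not strictly needed, since every compatible pattern is compared to $f_\star$ through the functional equation and uniqueness follows transitively; and the positivity of $\gamma$, which your support bijection needs, indeed follows from $\gamma\omega=\omega_\star$ with $\omega,\omega_\star\in(0,\pi)$.
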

Note that $\textnormal{\small{repl}}(f)$ is not necessarily equal to $\idnonzero_1(f)$. An example is $f(x) \!:=\! \cos (2x) \!+\! \cos (3x)$.

\section{Statistical Inference for Oscillation Processes} \label{sec:SSM}
As our model  is a GSSM statistical inference may be performed using Sequential Monte Carlo techniques. In the following,  the estimation of the oscillation pattern $f$ and of the other parameters uses a fixed-lag particle smoother in combination with a nonparametric MCEM algorithm.

\subsection{The Model} \label{sec:PracticalModel}

For practical purposes we modify/generalize the model (\ref{BasicModel}), (\ref{StateEquation1}) in two aspects.

1) In order to guarantee positivity of the increments, we use for the phase differences $\Delta{\phi_{t}}$ the ACD(1,0) (autoregressive conditional duration) model:
\begin{equation} \label{acd:model}
\Delta{\phi_{t}} = \big( \alpha + \beta \Delta{\phi_{t-1}} \big) \,\eta_t\,,
\end{equation}
where the $\eta_t$ are {e.g.} Beta or Gamma distributed with $\mathbf{E}\eta_t = 1$ (if $\mathbf{E}\eta_t \neq 1$, then $\eta_t$ can be replaced by $\tilde{\eta}_t = \eta_t/\mathbf{E}\eta_t$). We assume that $\alpha,\beta > 0$, $\beta < 1$, and $\alpha < \pi (1-\beta)$. It then can be shown that the (unconditional) mean of the phase increments is
\begin{equation}\label{acd:mean}
\omega = \mathbf{E}[\Delta{\phi_{t}}] = \frac{\alpha}{1-\beta} < \pi\,.
\end{equation}
The ACD model was originally introduced by Engle and Russell (1998) as a model for the dependency of the durations between consecutive transactions in financial markets.

2) For data such as the ECG data we add a time varying amplitude $a_t$ and a time varying baseline $b_t$ as hidden states. In order to keep the computations simple we use a Gaussian random walk model. This has in particular the advantage that we can use a
(fixed-lag) Rao-Blackwellized particle smoother that combines a sequential particle smoother for   $\phi_t$ with a Kalman smoother for $a_t$ and $b_t$ (Doucet et.al., 2000b). Estimates of $\phi_t$, $a_t$ and $b_t$ can then be obtained as the means of the smoothing distributions. For a recent overview of sequential Monte Carlo methods see Douc et al. (2014). Therefore, we use the observation model:
\begin{equation}
\label{intro:observationequation}
y_t \; = \;a_t \, f(\phi_t) + b_t + \varepsilon_t\,,
\end{equation}
with
\begin{equation}
\label{intro:acdstateequation}
\begin{pmatrix}
\phi_t \\
\psi_t
\end{pmatrix}  = \begin{pmatrix}
\phi_{t-1} + (\alpha + \beta \psi_{t-1}) \eta_t \\
(\alpha + \beta \psi_{t-1}) \eta_t\,.
\end{pmatrix}
\quad\mbox{and}\quad
\bigg[\!
\begin{pmatrix}
a_t\\
b_t
\end{pmatrix} \!-\!
\begin{pmatrix}
\mu_a\\
\mu_b
\end{pmatrix}
\!\bigg] =
A \,
\bigg[\!
\begin{pmatrix}
a_{t-1}\\
b_{t-1}
\end{pmatrix} \!-\!
\begin{pmatrix}
\mu_a\\
\mu_b
\end{pmatrix}
\!\bigg]
+
\begin{pmatrix}
\xi_t\\
\zeta_t
\end{pmatrix}\,,
\end{equation}

\noindent where $(\xi_t, \zeta_t)^T \sim \mathcal{N}(\mathbf{0}, Q)$ and $\varepsilon_t \sim
\mathcal{N}(0, \sigma_{\varepsilon }^2)$. It is assumed that $\varepsilon_t$, $\eta_t$ and
$(\xi_t, \zeta_t)^T$ are mutually and serially independent. For simplicity we assume
that $A=\mbox{diag}(1,1)$ (this is a typical trend model) and  $Q$ is diagonal. We assume that $\mu_a=1$ and $\mu_b=0$ in case where the oscillation pattern $f$ is a nonparametric function. If (say) $f(\cdot) = \cos (\cdot)$ we assume that $\mu_a$ and $\mu_b$ are parameters to be estimated. In the setting of constant (but unknown) amplitude and baseline, one may use $(a_t, b_t)^T = (\mu_a,\mu_b)^T$, which simplifies the estimation significantly.

\subsection{Rao-Blackwellized Fixed-Lag Particle Smoothing} \label{sec:RBFLPS}
Statistical inference of the model introduced in Section~\ref{sec:PracticalModel} using an EM algorithm requires an approximation of the  joint smoothing distributions  $p(\mathbf{x}_{0:t}|y_{1:T})$ for $0\le t \le T$. In order to approximate these posterior distributions we use a particle smoother (cf. Doucet et.al., 2001). The aim of particle filters and smoothers introduced in Gordon et al. (1993) and Kitagawa (1996) is to obtain recursively an approximation of the posterior distributions of the states given the observations using importance sampling and importance resampling steps. These approximations are based on a set of points, the \textit{particles}, associated with importance weights. 

Assume that the set of weighted particles
$\{(\mathbf{x}_{0:t-1}^i,  \omega_{t-1}^i)\}_{i=1}^N$ approximates
$p(\mathbf{x}_{0:t-1}|y_{1:t-1})$ by $\sum_{i=1}^N \omega_{t-1}^i \,\delta_{\mathbf{x}_{0:t-1}^i}(\mathbf{x}_{0:t-1})$, with $\delta$ the Dirac delta function. Then, for all $1\le i \le N$,
\begin{enumerate}[-]
  \item Sample $\mathbf{x}_t^i \sim p(\mathbf{x}_t|\mathbf{x}_{t-1}^i)$.
  \item Compute the importance weight $\breve{\omega}_t^i \propto \omega_{t-1}^i
  p(y_t|\mathbf{x}_t^i)$.
  \end{enumerate}
%
The new particles $\{(\mathbf{x}_{0:t}^i,  \omega_{t}^i)\}_{i=1}^N$, where $\{\omega_{t}^i\}_{i=1}^N$ are the normalized weights obtained from $\{\breve{\omega}_{t}^i\}_{i=1}^N$, approximate the posterior distribution $p(\mathbf{x}_{0:t}|y_{1:t})$. To avoid weight degeneracy, a resampling step that maps the
particle system $\{(\mathbf{x}_{0:t}^i, \omega_t^i)\}_{i=1}^N$ onto an
equally-weighted particle system is introduced, where each new particle is chosen in $\{\mathbf{x}_{0:t}^i\}_{i=1}^N$ according to $\{\omega_t^i\}_{i=1}^N$.  
Resampling is carried out whenever the effective sample size (Kong et.al., 1994) $(\sum_{i=1}^N (\omega_t^i)^2)^{-1}$ is below some threshold. We favor the systematic resampling
with threshold $0.2N$ (see Douc et.al., 2005, for alternatives).

To avoid degeneracy of past samples as new observations become available, we propose to use the fixed lag smoother of Olsson et al. (2008). For computational reasons, this method is likely to be more efficient than algorithms based on forward backward decompositions as the `Forward Filtering Backward Smoothing algorithm' of Huerzeler and K\"{u}nsch (1998) and Doucet et.al. (2000a) or the `Forward Filtering Backward Simulation algorithm' of Godsill et.al. (2004). The basic idea of fixed-lag smoothing is that $p(\mathbf{x}_{0:t}| y_{1:T})$ should be close to $p(\mathbf{x}_{0:t}| y_{1:t(\ell)})$, with $t(\ell) \eqdef \min\{t+\ell,T\}$, for a well chosen lag $\ell$ (no resampling step is performed on the past samples when observations are obtained after time $t(\ell)$). As noted by Doucet et.al. (2000b),
the posterior distribution can be decomposed as
\begin{equation*}
p(\mathbf{x}_{0:t(\ell)}| y_{1:t(\ell)}) =  p(a_{0:t(\ell)}, b_{0:t(\ell)}|y_{1:t(\ell)}, \phi_{0:t(\ell)}) \, p(\phi_{0:t(\ell)}, \psi_{0:t(\ell)}| y_{1:t(\ell)})\,.
\end{equation*}
$p(a_{0:t(\ell)}, b_{0:t(\ell)}|y_{1:t(\ell)}, \phi_{0:t(\ell)})$ is then computed by a Kalman smoother
while $p(\phi_{0:t(\ell)}, \psi_{0:t(\ell)}| y_{1:t(\ell)})$ is approximated by samples
$\{(\phi_{0:t(\ell)}^i, \psi_{0:t(\ell)}^i)^T,  \omega_{t(\ell)}^i\}_{i=1}^N$ from the
particle smoother. Thus the above relation implies that the marginal densities
$p(a_{t}, b_{t},\phi_t|y_{1:T})$ are approximated by
\begin{equation} \label{SmootherDensityApproximation}
p(a_{t}, b_{t},\phi_t|y_{1:T}) \approx \sum_{i=1}^N \tilde{\omega}_{t}^i \,
\mathcal{N}\Big(a_t, b_t \big| (\tilde{a}_t^i, \tilde{b}_t^i)^T, \tilde{\Sigma}_t^i \Big) \,
\delta_{\phi_{t}^i}(\phi_{t})\,,
\end{equation}
where $\tilde{\omega}_{t}^i = \omega_{t(\ell)}^i$, $(\tilde{a}_t^i, \tilde{b}_t^i) := (a_{t|t(\ell)}^i,b_{t|t(\ell)}^i) = \mathbf{E} \big( (a_t,b_t) \big|y_{1:t(\ell)},\phi_{0:t(\ell)}^i\big)$
and $\tilde{\Sigma}_t^i := \Sigma_{t|t(\ell)}^i$ are computed
with the Kalman smoother. Smoothing by marginalization has been criticized for causing sample
impoverishment (Doucet et.al., 1999). While
this is true in general, it is not an issue in the setting of this article
because the lag $\ell$ is not large and the resampling frequency is  low.
In contrast to smoothing algorithms which proceed backwards in time
(c.f. Godsill et.al., 2004; Doucet et.al, 2000a), smoothing
by marginalization has the advantage that it can be applied on-line.
When the observation at time $t(\ell)$ comes in, the estimates of time $t$ can be updated using the
fixed-lag smoothing density. In addition, it is computationally very cheap.
The RBPS has computational costs $\mathcal{O}(\ell NT)$ for $T$ smoothing time
steps. At each iteration, only the particles for times $t(\ell)-1,\ldots, t$ are
required, implying a storage requirement of $\mathcal{O}(\ell N)$. The following algorithm
can e.g. be found in Shumway and Stoffer (2011), Property 6.2 and 6.3.

\bigskip

\hrule

\bigskip

\noindent
\textbf{Rao-Blackwellized Fixed-Lag Particle Smoother (RBPS)}

\bigskip

\hrule

\bigskip

\noindent
{\em Initialization} (for $t = 0$)
      \vspace*{-0.2cm}
	\begin{itemize}
       \itemsep-0.2cm
		\item {\textbf{For} $i=1,\ldots,N$}:  Sample $(\phi_0^i, \psi_0^i)^T \sim
		p(\phi_0, \psi_0)$, set $\omega_0^i = 1$, and choose $a_0^i$, $b_0^i$,
		$\Sigma_0^i$ according to prior knowledge.
		\end{itemize}
{\em Filtering} (for $t = 1,2,\ldots$)
      \vspace*{-0.2cm}
	\begin{enumerate}
      \itemsep-0.2cm
      \item {\em Kalman Prediction Step}
      \vspace*{-0.2cm}
      \begin{itemize}
        \item {\textbf{For} $i=1,\ldots,N$}: Compute with $\mu := (\mu_a,\mu_b)^{T}$
        \begin{equation*}
        (a_{t|t-1}^i, b_{t|t-1}^i)^T = \mu + A \big[(a_{t-1}^i, b_{t-1}^i)^T - \mu\big],\quad \Sigma_{t|t-1}^i = A \Sigma_{t-1}^i A^T + Q.
        \end{equation*}
      \end{itemize}
      \item {\em Importance Sampling Step}
            \vspace*{-0.2cm}
      \begin{itemize}
        \item {\textbf{For} $i=1,\ldots,N$}: Sample $(\phi_t^i, \psi_t^i)^T
        \sim p(\phi_t, \psi_t|\phi_{t-1}^i, \psi_{t-1}^i)$, compute\\
        $F_{t|t-1}^i=C_t^i \Sigma_{t|t-1}^i (C_t^i)^T + \sigma^2_{\varepsilon}$
        with $C_t^i = (f(\phi_t^i), 1)$ and evaluate importance weights
        \begin{equation*}
        \breve{\omega}_t^i \propto \omega_{t-1}^i \,p(y_t|y_{1:t-1},
        \phi_{0:t}^i) = \omega_{t-1}^i \, \mathcal{N}\Big(y_t\big|C_t^i
        (a_{t|t-1}^i, b_{t|t-1}^i)^T, F_{t|t-1}^i\Big).
        \end{equation*}
		\item {\textbf{For} $i=1,\ldots,N$}: Normalize importance weights
		$\omega_t^i  = \breve{\omega}_t^i/(\sum_{j=1}^N \breve{\omega}_t^j)$.
      \end{itemize}
      \item {\em Resampling Step if $(\sum_{i=1}^N (\omega_t^i)^2)^{-1}<0.2N$}
       \vspace*{0.2cm}
      \item {\em Kalman Updating Step}
            \vspace*{-0.2cm}
      \begin{itemize}
        \item {\textbf{For} $i=1,\ldots,N$}: Compute
        \begin{eqnarray*}
        \big(a_t^i, b_t^i\big)^T &=& \big(a_{t|t-1}^i, b_{t|t-1}^i\big)^T + \Sigma_{t|t-1}^i
        (C_t^i)^T  \left\{y_t - C_t^i \big(a_{t|t-1}^i, b_{t|t-1}^i\big)^T
        \right\} (F_{t|t-1}^i)^{-1},\\
        \Sigma_t^i &=& \Sigma_{t|t-1}^i - \left\{\Sigma_{t|t-1}^i (C_t^i)^T
        C_t^i \Sigma_{t|t-1}^i \right\} (F_{t|t-1}^i)^{-1}.
        \end{eqnarray*}
      \end{itemize}
      \end{enumerate}
      {\em Smoothing}
      \begin{enumerate}
            \item[5.] {\em Kalman Smoothing Step} (for $k = t(\ell)-1,\ldots,t$)
            \vspace*{-0.2cm}
      \begin{itemize}
        \item {\textbf{For} $i=1,\ldots,N$}: Compute
        \begin{eqnarray*}
        V_k^i &=& \Sigma_k^i A^T (\Sigma_{k+1|k}^i)^{-1},\\
        (\tilde{a}_k^i, \tilde{b}_k^i)^T &=& (a_k^i, b_k^i)^T + V_k^i \left\{
        \big(\tilde{a}_{k+1}^i, \tilde{b}_{k+1}^i\big)^T - \big(a_{k+1|k}^i,
        b_{k+1|k}^i\big)^T\right\}, \\
        \tilde{\Sigma}_k^i &=& \Sigma_k^i + V_k^i (\tilde{\Sigma}_{k+1}^i -
        \Sigma_{k+1|k}^i) (V_k^i)^T, \\
        \tilde{\Sigma}_{k,k-1}^i &=& \Sigma_k^i \big(V_{k-1}^i)^T +
        V_k^i(\tilde{\Sigma}_{k+1,k}^i - A \Sigma_{k}^i\big) (V_{k-1}^i)^T,
        \end{eqnarray*}
      \end{itemize}
      with initial values
      \[
      (\tilde{a}_{t(\ell)}^i, \tilde{b}_{t(\ell)}^i)^T =
      (a_{t(\ell)}^i, b_{t(\ell)}^i)^T\,,\;\tilde{\Sigma}_{t(\ell)}^i = \Sigma_{t(\ell)}^i \quad \mbox{and}\quad
      \tilde{\Sigma}_{t(\ell),t(\ell)-1}^i = \big(I-K_{t(\ell)}^i C_{t+\ell}^i\big) A \,\Sigma_{t(\ell)-1}^i\,,
      \]
      where $K_{t(\ell)}^i:= \Sigma_{t(\ell)|t(\ell)-1}^i (C_{t(\ell)}^i)^T (F_{t(\ell)|t(\ell)-1}^i)^{-1}$ is the Kalman gain.\\
      Furthermore $\big(a_{k+1|k}^i,
        b_{k+1|k}^i\big)^T = A (a_k^i,
        b_k^i)^T + (I-A)\mu$.
      \item[6.] {\em Result}
            \vspace*{-0.2cm}
      \begin{itemize}
        \item Obtain amplitude estimate $\hat{a}_{k} =
      \sum_{i=1}^N \omega_{t(\ell)}^i \, \tilde{a}_{k}^i$, baseline estimate $\hat{b}_{k} =
      \sum_{i=1}^N \omega_{t(\ell)}^i \, \tilde{b}_{k}^i$, and phase estimate
      $\hat{\phi}_{k} = \sum_{i=1}^N \omega_{t(\ell)}^i \, \phi_{k}^i$ for time $k = t$.
      \end{itemize}
	\end{enumerate}
\hrule

\subsection{The EM Algorithm for the parametric MLE} \label{sec:EM_Algorithm}

In this section, we assume that $f$ is known and estimate $\theta = (\alpha, \beta, \sigma^2_{\varepsilon}, \mu, A, Q)$ based on a stochastic EM algorithm (Dempster et.al., 1977). Shumway and Stoffer (1982) had introduced the EM algorithm for linear Gaussian state space models.
Wei and Tanner (1990) (see also Tanner 1993) had proposed to replace the E-step by
a Monte Carlo integration leading to the MCEM Algorithm.
In the present model the equations for the
`Gaussian part' take the same form (conditional on $\phi_t$) as the original equations
leading to a partial MCEM Algorithm. This reduces the computational complexity considerably.

Assume that the signal
$y_t$ is received up to time $T$. The EM algorithm maximizes the likelihood $p_{\theta}(y_{1:T})$ iteratively. In the E-step, the $\mathcal{Q}(\theta|\theta^{(m)}) = \mathbf{E}_{\theta^{(m)}} [\log
p_{\theta}(\mathbf{x}_{0:T}, y_{1:T}) |y_{1:T}]$ is approximated, where $\theta^{(m)}$ is the current estimate.
We have
\begin{align} \label{Q-function}
\mathcal{Q}(\theta|\theta^{(m)})
& =  \mathbf{E}_{\theta^{(m)}} [\log
p(\phi_{0}, \psi_{0}) |y_{1:T}] + \sum_{t=1}^T
\mathbf{E}_{\theta^{(m)}} [\log p_{\theta}(y_{t}| \mathbf{x}_{t}) |y_{1:T}]\\
& \;\;
+ \sum_{t=1}^T \mathbf{E}_{\theta^{(m)}} [\log
p_{\theta}(a_{t}, b_{t}| a_{t-1}, b_{t-1}) |y_{1:T}] +
\sum_{t=1}^T \mathbf{E}_{\theta^{(m)}} [\log p_{\theta}(\phi_{t}, \psi_{t}|
\phi_{t-1}, \psi_{t-1}) |y_{1:T}]\,. \nonumber
\end{align}
It follows that $\mathcal{Q}(\theta|\theta^{(m)})$ could be approximated through
smoothing particles, which are generated with respect to the parameter value
$\theta^{(m)}$. Due to the computational complexity we use in this paper only the fixed lag smoother,
i.e. we replace  $\mathbf{E}_{\theta^{(m)}} [\,\cdot\, |y_{1:T}]$
by $\mathbf{E}_{\theta^{(m)}} [\,\cdot\, |y_{1:t(\ell)}]$ which can be calculated by the RBPS. The difference should be minor for reasonable $\ell$ (see Olsson et al. (2008) for an explicit control of the $\mathrm{L}_p$-mean error of the fixed lag smoother when applied to additive functionals). With
\[
\tilde{S}_t^i := \mean \big[(a_t, b_t)^T(a_t, b_t)\big| y_{1:t(\ell)},\phi_{0:t(\ell)}^i\big] = \tilde{\Sigma}_t^i + (\tilde{a}_t^i, \tilde{b}_t^i)^T
(\tilde{a}_t^i, \tilde{b}_t^i)\quad \mbox{and}\quad C_t^i := (f(\phi_t^i), 1)\,,
\]
we obtain
\begin{align*}
\hat{\mathcal{Q}}(\theta|\theta^{(m)})
& =  \text{const} - \frac12 \sum_{t=1}^{T} \sum_{i=1}^N
\tilde{\omega}_{t}^i \left[ \log 2\pi + \log \sigma_{\varepsilon \star}^2 +
\frac{1}{\sigma_{\varepsilon \star}^2} \left\{ y_t^2 - 2 C_t^i (\tilde{a}_t^i,
\tilde{b}_t^i)^T y_t + C_t^i \tilde{S}_t^i (C_t^i)^T \right\} \right]\\
& \quad  - \frac12 \sum_{t=1}^{T} \sum_{i=1}^N
\tilde{\omega}_{t}^i \bigg[ 2 \log 2\pi + \log |Q| + \text{tr} \bigg\{ Q^{-1} \Big(\tilde{\Sigma}_t^i + \big((\tilde{a}_t^i, \tilde{b}_t^i)^T - \mu \big) \big((\tilde{a}_t^i, \tilde{b}_t^i) - \mu^{T} \big)\Big)\\
& \hspace*{3.4cm} - Q^{-1} A \Big(\tilde{\Sigma}_{t-1,t}^i + \big((\tilde{a}_{t-1}^i, \tilde{b}_{t-1}^i)^T - \mu \big) \big((\tilde{a}_{t}^i, \tilde{b}_{t}^i) - \mu^{T} \big)\Big)\\
& \hspace*{3.4cm} - Q^{-1} \Big(\tilde{\Sigma}_{t,t-1}^i + \big((\tilde{a}_{t}^i, \tilde{b}_{t}^i)^T - \mu \big) \big((\tilde{a}_{t-1}^i, \tilde{b}_{t-1}^i) - \mu^{T} \big)\Big)A^{T}\\
& \hspace*{3.4cm} + Q^{-1} A \Big(\tilde{\Sigma}_{t-1}^i + \big((\tilde{a}_{t-1}^i, \tilde{b}_{t-1}^i)^T - \mu \big) \big((\tilde{a}_{t-1}^i, \tilde{b}_{t-1}^i) - \mu^{T} \big)\Big)A^{T} \bigg\} \bigg]
\end{align*}
\vspace*{-0.5cm}
\begin{equation*}
  + \sum_{t=1}^T \sum_{i=1}^N
\tilde{\omega}_{t}^i \log p_{\alpha, \beta}(\phi_{t}^i, \psi_{t}^i|
\phi_{t-1}^i, \psi_{t-1}^i)\,. \qquad \qquad \qquad \qquad \qquad \qquad
\end{equation*}

\setlength{\textheight}{225mm}
\setlength{\textwidth}{6.1in}
\setlength{\topmargin}{-11mm}

Maximization with
respect to $\sigma^2_{\varepsilon}$, $\mu$, $A$, and $Q$ yields in the M-step the estimates
\begin{eqnarray*}
(\sigma_{\varepsilon }^2)^{(m+1)} &=& \frac1T \sum_{t=1}^{T} \sum_{i=1}^N
\tilde{\omega}_{t}^i \left\{ y_t^2 - 2 C_t^i (\tilde{a}_t^i,
\tilde{b}_t^i)^T y_t + C_t^i \tilde{S}_t^i (C_t^i)^T \right\},\\
\mu^{(m+1)} &=& \frac1T\sum_{t=1}^{T} \sum_{i=1}^N \tilde{\omega}_{t}^i \, (\tilde{a}_t^i,
\tilde{b}_t^i)^T + O_{p}\big(\frac {1} {T}\big),\\
A^{(m+1)} &=& \,\left( \sum_{t=1}^{T} \sum_{i=1}^N \tilde{\omega}_{t}^i
\, \Big(\tilde{\Sigma}_{t,t-1}^i + \tilde{D}_{t,t-1}^i\Big)\right) \left( \sum_{t=1}^{T} \sum_{i=1}^N \tilde{\omega}_{t}^i
\, \Big(\tilde{\Sigma}_{t-1}^i + \tilde{D}_{t-1}^i\Big) \right)^{-1},\\
Q^{(m+1)} &=& \frac1T \left\{ \sum_{t=1}^{T} \sum_{i=1}^N
\tilde{\omega}_{t}^i \, \Big(\tilde{\Sigma}_{t}^i + \tilde{D}_{t}^i\Big) - A^{(m+1)} \sum_{t=1}^{T}
\sum_{i=1}^N \tilde{\omega}_{t}^i \, \Big(\tilde{\Sigma}_{t-1,t}^i + \tilde{D}_{t-1,t}^i\Big)\right\},
\end{eqnarray*}
with
\begin{eqnarray*} \label{}
\tilde{D}_t^i &=&  \big((\tilde{a}_t^i,
\tilde{b}_t^i)^T - \mu^{(m+1)}\big) \big((\tilde{a}_t^i,
\tilde{b}_t^i)^{T} - \mu^{(m+1)}\big)^{T},\\
\tilde{D}_{t,t-1}^i &=&  \big((\tilde{a}_t^i,
\tilde{b}_t^i)^T - \mu^{(m+1)}\big) \big((\tilde{a}_{t-1}^i,
\tilde{b}_{t-1}^i)^{T} - \mu^{(m+1)}\big)^{T}. \qquad \qquad \qquad
\end{eqnarray*}
In case where $\mu$ is assumed to be known we set $\mu^{(m+1)}=\mu$ (e.g. when $f$ is a nonparametric function $\mu_a$ and $\mu_b$ are usually set to $1$ and $0$ respectively). For $\alpha$ and $\beta$, numerical maximization is required because no
closed-form expression can be derived.

\bigskip

\hrule

\medskip

\noindent


\noindent
	\begin{enumerate}
      \item[7.] {\em (Parametric) EM Step} (see also 8. below)
      \vspace*{-0.2cm}
      \begin{itemize}
        \item Compute $(\sigma_{\varepsilon}^2)^{(m+1)}$,
        $\mu^{(m+1)}$,$A^{(m+1)}$, and $Q^{(m+1)}$.
        \item Obtain $\alpha^{(m+1)}$ and $\beta^{(m+1)}$ by numerical maximization of $\hat{\mathcal{Q}}_t(\alpha,
        \beta|\alpha^{(m)}, \beta^{(m)})$ to .
        \end{itemize}
    \end{enumerate}
\hrule

\medskip

\subsection{Nonparametric Estimation of the Oscillation Pattern} \label{sec:NonparametricEstimate}

If $f$ is unknown we have to maximize in addition
the second term in (\ref{Q-function}) with respect to $f$. The other estimates
remain unchanged including $(\sigma_{\varepsilon}^2)^{(m+1)}$ where $C_t^i $ must be replaced
by $\widehat{C}_t^i := (f^{(m+1)}(\phi_t^i), 1)$). For simplicity we ignore  the other terms, i.e. we maximize
\begin{equation}
\label{NonparaEM}
\mathcal{Q}(f|f^{(m)})  \propto  \text{const } - \sum_{t=1}^T \mathbf{E}_{f^{(m)}} [\{y_t -
a_t f(\phi_t) - b_t\}^2 |y_{1:t(\ell)}]
\end{equation}
with respect to $f$. As for nonparametric maximum likelihood estimation we need some
regularization in order to obtain a proper estimator. The basic idea for regularization
now is to approximate the densities $p(a_{t}, b_{t}, \phi_{t}| y_{1:t})$ in (\ref{NonparaEM})
instead of (\ref{SmootherDensityApproximation}) by the kernel density
\begin{equation} \label{kernelprobdistribution}
p(a_{t}, b_{t}, \phi_{t}| y_{1:t(\ell)})
\approx \sum_{i=1}^N \tilde{\omega}_{t}^i \,
\mathcal{N}\Big(a_t, b_t \big| (\tilde{a}_t^i, \tilde{b}_t^i)^T, \tilde{\Sigma}_t^i \Big)\, K_{h}\big(\phi_{t}-\phi_{t}^i\big)\,,
\end{equation}
where $K_{h}(\cdot) := \frac {1} {h} K\big(\frac {\cdot} {h}\big)$ with a bandwidth $h$ and a kernel $K (\cdot)$. Without prior knowledge both are equally good.
The following proposition shows that this leads to an estimator for $f$ which
is also based on kernel approximations. $h$ might be chosen adaptively (say as in Lepski et.al., 1997) but the situation is different here since $N$ can be chosen arbitrarily large - leading to $NT$ data points thus allowing for choosing $h$ arbitrarily small. In any case the optimal $h$ depends on the unknown $f$ and the number of data meaning that $h$ should be the same in each iteration step.
\begin{proposition}\label{NEM:prop1}
Suppose that the density $p(a_{t}, b_{t}, \phi_{t}| y_{1:t})$ is as in (\ref{kernelprobdistribution}). Then ${\mathcal{Q}}(f|f^{(m)})$
is maximized by the estimate
\begin{equation} \label{NEM:optEstimator2}
\tilde{f}^{(m+1)}(\phi) =
\frac{
\sum_{t=1}^T \sum_{i=1}^N
\tilde{\omega}_t^i \, K_{h}\big( (\phi-\phi_{t}^i) \mbox{ mod } 2\pi \big) \big\{ y_t
\tilde{a}_t^i - (\tilde{S}_t^i)_{12} \big\}
}{
\sum_{t=1}^T
\sum_{i=1}^N
\tilde{\omega}_t^i \,K_{h}\big( (\phi-\phi_{t}^i) \mbox{ mod } 2\pi \big)
(\tilde{S}_t^i)_{11} } \,.
\end{equation}
The analogue result holds when using, instead of the fixed lag smoother, the filter
(where $\tilde{a}_t^i$ and $\tilde{S}_t^i$ are replaced by $a_t^i$ and $S_t^i$ respectively)
or the complete smoother.
\end{proposition}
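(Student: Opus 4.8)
The plan is to substitute the kernel approximation \eqref{kernelprobdistribution} into the objective \eqref{NonparaEM}, take the conditional expectation explicitly, and then solve a pointwise quadratic maximization in $f(\phi)$. First I would write out the term being maximized: $\mathcal{Q}(f|f^{(m)}) \propto -\sum_{t=1}^{T}\mean_{f^{(m)}}[\{y_t - a_t f(\phi_t) - b_t\}^2 \mid y_{1:t+l}]$. Using the approximation of $p(a_t,b_t,\phi_t \mid y_{1:t+l})$ as a mixture over $i$ of Gaussians in $(a_t,b_t)$ times a kernel $K_h$ in $\phi_t$ (with weights $\tilde\omega_t^i$, means $(\tilde a_t^i,\tilde b_t^i)$, covariances $\tilde\Sigma_t^i$), the conditional expectation becomes a finite sum
\[
\mathcal{Q}(f|f^{(m)}) \propto -\sum_{t=1}^{T}\sum_{i=1}^{N}\tilde\omega_t^i \int K_h(\phi-\phi_t^i)\,\mean\!\big[\{y_t - a_t f(\phi) - b_t\}^2 \,\big|\, \text{$i$-th Gaussian},\ \phi_t=\phi\big]\,\rmd\phi .
\]
Expanding the square and using $\mean[a_t^2] = (\tilde S_t^i)_{11}$, $\mean[a_t b_t] = (\tilde S_t^i)_{12}$, $\mean[a_t] = \tilde a_t^i$ (the relevant entries of $\tilde S_t^i = \tilde\Sigma_t^i + (\tilde a_t^i,\tilde b_t^i)^T(\tilde a_t^i,\tilde b_t^i)$), the integrand at a given $\phi$ is a quadratic in the scalar $f(\phi)$ with leading coefficient $(\tilde S_t^i)_{11}>0$ and linear coefficient $-2\{y_t\tilde a_t^i - (\tilde S_t^i)_{12}\}$; the constant term does not involve $f(\phi)$.

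The key observation is that, because $K_h$ acts as a weight concentrated near $\phi_t^i$ and $f$ enters \eqref{NonparaEM} only through its values, the maximization decouples across $\phi$: for each fixed $\phi$ we maximize
\[
-\sum_{t=1}^{T}\sum_{i=1}^{N}\tilde\omega_t^i\, K_h\big((\phi-\phi_t^i)\bmod 2\pi\big)\Big[(\tilde S_t^i)_{11}\, f(\phi)^2 - 2\{y_t\tilde a_t^i - (\tilde S_t^i)_{12}\}\, f(\phi)\Big]
\]
over the real number $f(\phi)$. Setting the derivative in $f(\phi)$ to zero gives the stated \eqref{NEM:optEstimator2}; the second-order condition holds since the coefficient of $f(\phi)^2$ is a nonnegative combination (via $\tilde\omega_t^i K_h \ge 0$ and $(\tilde S_t^i)_{11}\ge 0$) and is strictly positive whenever the denominator in \eqref{NEM:optEstimator2} is nonzero, so the maximizer is unique there. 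The $\bmod\,2\pi$ in the kernel argument is just the reflection of $2\pi$-periodicity of $f$: one should really write $p(\phi_t\mid y_{1:t+l}) \approx \sum_i \tilde\omega_t^i \sum_{k\in\Zset} K_h(\phi_t - \phi_t^i - 2\pi k)$ to keep $f$ periodic, and the periodized kernel is what appears.

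The only genuinely delicate point is the interchange of the conditional expectation with the kernel-smoothed approximation and the decoupling argument itself: strictly speaking, $f$ ranges over $\mathcal{F}$ (an $\ell_2$-constrained Fourier class), not over arbitrary functions, so one must check that the pointwise minimizer \eqref{NEM:optEstimator2} is admissible — i.e. is (or can be taken to be) $2\pi$-periodic and square-integrable. Periodicity is automatic from the periodized kernel; integrability follows because $\tilde f^{(m+1)}$ is a ratio of two finite sums of smooth nonnegative-weighted kernels and is bounded whenever the denominator is bounded away from $0$ on its support, which it is for $N$ finite. For the filter and complete-smoother variants, the argument is verbatim the same with $(\tilde a_t^i,\tilde S_t^i,\tilde\omega_t^i)$ replaced by $(a_t^i, S_t^i, \omega_t^i)$ or by the full-smoother analogues; nothing in the quadratic-maximization step changes. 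I expect the main obstacle — such as it is — to be bookkeeping: writing $\tilde S_t^i$ in terms of $\tilde\Sigma_t^i$ and $(\tilde a_t^i,\tilde b_t^i)$ and correctly extracting the $(1,1)$ and $(1,2)$ entries so that the algebra matches \eqref{NEM:optEstimator2} exactly.
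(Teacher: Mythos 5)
Your proposal is correct and follows essentially the same route as the paper's proof: substitute the kernel-density approximation (\ref{kernelprobdistribution}) into (\ref{NonparaEM}), expand the quadratic using the second moments $(\tilde{S}_t^i)_{11}$, $(\tilde{S}_t^i)_{12}$, and minimize pointwise in $f(\phi)$ for each fixed $\phi$, with the filter/smoother cases handled verbatim. The extra remarks on periodization of the kernel and admissibility of the pointwise minimizer are fine but not needed beyond what the paper does.
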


\proof See the appendix.

\bigskip

Each step of the above nonparametric EM algorithm improves the likelihood in ``nearly all cases''.
The latter restriction comes from the approximation (\ref{kernelprobdistribution}) which needs to be ``good enough''. More precisely we have with Jensen's inequality
\begin{align*}
\log \frac{p_{f^{(m+1)}}(y_{1:T})} {p_{f^{(m)}}(y_{1:T})}
\ge \sum_{t=1}^T \mathbf{E}_{f^{(m)}} \big[\{y_t \!-\!
a_t f^{(m)}(\phi_t) \!-\! b_t\}^2 |y_{1:T}\big] \!-\!
\sum_{t=1}^T \mathbf{E}_{f^{(m)}} \big[\{y_t \!-\!
a_t f^{(m+1)}(\phi_t) \!-\! b_t\}^2 |y_{1:T}\big].
\end{align*}
If $\tilde{f}^{(m+1)}$ would maximize (\ref{}) we had  $p_{f^{(m+1)}}(y_{1:T}) \geq p_{f^{(m)}}(y_{1:T})$.
Since we have used the approximation (\ref{kernelprobdistribution}) this is however not guaranteed in a strict sense.

\bigskip

\noindent \underline{Improving the speed of convergence of the MSEM - algorithm}:\\[6pt]
An example for the iteration steps of the nonparametric EM-estimate is given in Figure~\ref{fig:ecgObsFuncEstC} (for details see Section~\ref{sec:HumanECG}). If no prior
information on the shape of $f$ is used (when choosing $\hat{f}^{(0)}$) then convergence may be slow (since a poor $\hat{f}^{(0)}$ will lead to poor particles
and then again to a poor $\hat{f}^{(1)}$). To speed up convergence we may invoke two kinds of additional information into the algorithm that speed up convergence
considerably. The idea is to use these corrections only during the first few steps:

\medskip

\noindent 1) The first correction uses that the empirical distribution of the points $\phi_t$ mod $2\pi$ converges to an uniform distribution.
This follows since the increments $\Delta \phi_t$ are supposed to be stationary. By using this property we can exclude time warping (see \ref{enumerate1:timewarping}) in Section~\ref{sec:Identifiability}) and the correction
consists of transforming the phases accordingly with the edf of the $\phi_t$ mod $2\pi$. Let $\hat{F}_{\phi}$ be a smoothed version of this edf given by
\begin{equation}\label{phasedistrfunc:kernelest}
\hat{F}_{\phi}(y) = \int_0^y \hat{p}_{\phi}(\phi) \, \rmd \phi,
\end{equation}
where
\begin{equation}\label{foldedphase:kernelest}
\hat{p}_{\phi}(\phi) =  \sum_{t=1}^T \sum_{i=1}^N \tilde{\omega}_t^i \,
K_{h_{\phi}}\!\big((\phi-\phi_t^i) \, \text{mod}\, 2\pi \big)
\end{equation}
(note that it is computationally more efficient to use a
frequency polygon instead of
(\ref{foldedphase:kernelest}) since then the distribution
function and the inverse distribution can be easily calculated).

We then remove possible time warping by transforming $f$ and the particles to
\begin{equation*} \label{}
\check{f}^{(m+1)}(\check{\phi}) = \hat{f}^{(m+1)} \Big(\hat{F}_{\phi}^{-1}\Big(\frac {\check{\phi} \, \text{mod} \,2\pi} {2\pi}\Big)\!\Big) \quad \mbox{and} \quad \check{\phi}_t^i = 2\pi \big\{ \hat{F}_{\phi}(\phi_t^i\, \text{mod} \, 2\pi) + \lfloor
\phi_t^i / (2 \pi) \rfloor \big\}.
\end{equation*}
\noindent (for simplicity we denote in step 2) the new $\check{f}^{(m+1)}$ and $\check{\phi}_t^i$ again by $\hat{f}^{(m+1)}$ and $\phi_t^{i}$).
\medskip

\noindent 2) The second correction uses the information that all $2\pi$-periodic behavior of the signal is due to the periodicity of $f$ and not to any
periodic behavior of the amplitude $a_t$ and the baseline $b_t$ (this follows from the independence of the process $\phi_t$ from $a_t$ and $b_t$). Using this
information means to remove all $2\pi$-periodic structures from the amplitude and baseline estimates and to transfer them to the oscillation pattern $f$, i.e.
to make the transformation
\begin{align*} \label{}
\check{f}^{(m+1)}(\phi) &= \hat{a}_{\text{per}}(\phi)\times \hat{f}^{(m+1)}(\phi) + \hat{b}_{\text{per}}(\phi);\\
\check{a}_t^i &= \tilde{a}_t^i / \hat{a}_{\text{per}}(\phi_t^i \, \text{mod} \, 2\pi); \qquad \check{b}_t^i = \tilde{b}_t^i - \hat{b}_{\text{per}}(\phi_t^i \, \text{mod} \, 2\pi).
\end{align*}
Here
\begin{equation*} \label{}
\hat{a}_{\text{per}}(\phi) = \frac{\sum_{t=1}^T \sum_{i=1}^N \tilde{\omega}_t^i \, K_{h_a}\!\big((\phi-\phi_t^i) \,  \text{mod} \, 2\pi\big) \, \tilde{a}_t^i / \bar{a} (\phi_t^i)} {\sum_{t=1}^T
\sum_{i=1}^N \tilde{\omega}_t^i \, K_{h_a}\!\big((\phi-\phi_t^i) \,  \text{mod} \, 2\pi\big)} , \quad \phi \in (0,2\pi], \quad \,
\end{equation*}
where
\begin{equation*} \label{}
\bar{a} (\phi) = \frac{\sum_{t=1}^T \sum_{i=1}^N \tilde{\omega}_t^i \,I_{\{-2\pi < \phi_t^i-\phi \le 2\pi\}} \,
\tilde{a}_t^i} {\sum_{t=1}^T \sum_{i=1}^N \tilde{\omega}_t^i \, I_{\{-2\pi < \phi_t^i-\phi \le 2\pi\}}}, \quad  \phi \in (0,\infty), \quad \qquad \qquad \qquad
\end{equation*}
smoothes the amplitude and the baseline by an average over 2 periods (the necessity of $\bar{a} (\phi)$ becomes obvious when considering the example
where $\tilde{a}_t^i$ is linear). Similarly
\begin{equation*} \label{}
\hat{b}_{\text{per}}(\phi) = \frac{\sum_{t=1}^T \sum_{i=1}^N \tilde{\omega}_t^i \, K_{h_b} \big((\phi-\phi_t^i) \,  \text{mod} \, 2\pi\big) \, \big(\tilde{b}_t^i - \bar{b} (\phi_t^i)\big)} {\sum_{t=1}^T
\sum_{i=1}^N \tilde{\omega}_t^i \, K_{h_b}\!\big((\phi-\phi_t^i) \,  \text{mod} \, 2\pi\big) } , \quad \phi \in (0,2\pi],
\end{equation*}
with
\begin{equation*} \label{}
\bar{b} (\phi) = \frac{\sum_{t=1}^T \sum_{i=1}^N \tilde{\omega}_t^i \,I_{\{-2\pi < \phi_t^i-\phi \le 2\pi\}} \,
\tilde{b}_t^i} {\sum_{t=1}^T \sum_{i=1}^N \tilde{\omega}_t^i \,I_{\{-2\pi < \phi_t^i-\phi \le 2\pi\}}},  \quad  \phi \in (0,\infty). \quad \qquad \qquad \qquad
\end{equation*}

\medskip

Performing first step 1 and then  2 has the disadvantage that all $\tilde{a}_t^i$ have to be recalculated after step 1 which is time consuming.
We therefore have combined both steps by
\begin{equation}
\label{nonpEM:transformation2}
 \check{f}^{(m+1)}(\breve{\phi}) = \hat{a}_{\text{per}}\Big(\hat{F}_{\phi}^{-1}\!\Big(\frac {\breve{\phi} \, \text{mod} \,2\pi} {2\pi}\Big)\!\Big) \! \times \!
 \hat{f}^{(m+1)} \Big(\hat{F}_{\phi}^{-1}\!\Big(\frac {\breve{\phi} \, \text{mod} \,2\pi} {2\pi}\Big)\!\Big)  +
 \hat{b}_{\text{per}}\Big(\hat{F}_{\phi}^{-1}\!\Big(\frac {\breve{\phi} \, \text{mod} \,2\pi} {2\pi}\Big)\!\Big)
\end{equation}

\medskip

\hrule

\medskip

\noindent
	\begin{enumerate}
      \item[8.] {\em Nonparametric EM Step}
            \vspace*{-0.2cm}
      \begin{itemize}
        \item Minimize the third and fourth term of (\ref{Q-function}) (with $y_{1:T}$ replaced by $y_{1:t+l}$)
        as described above to get estimates  $\mu^{(m+1)}$,$A^{(m+1)}$, $Q^{(m+1)}$, $\alpha^{(m+1)}$ and $\beta^{(m+1)}$,
        and the second term of (\ref{Q-function}) to get estimates $\tilde{f}^{(m+1)}$ (defined by (\ref{NEM:optEstimator2})) and $(\sigma_{\varepsilon}^2)^{(m+1)}$.
        \item Perform the correction steps 1) and 2) by using (\ref{nonpEM:transformation2}). The resulting estimator $\hat{f}^{(m+1)}$ is used in the next iteration.
        Iterate this until convergence. In the final step set $\hat{f}^{(m+1)}=\tilde{f}^{(m+1)}$ without using the correction.
        \item Keep the initial particles and weights $\phi_0^i$,
        $\tilde{a}_0^i$, $\tilde{b}_0^i$, and $\tilde{\omega}_0^i$, $i=1,
        \ldots, N$, for the next iteration (they have been updated due to the fixed lag smoother).
        \end{itemize}
    \end{enumerate}
\hrule

\bigskip

To start the iteration an initial guess $\hat{f}^{(0)}$ is required where prior information may be used. In
Section~\ref{sec:HumanECG} on ECG recordings it turned out that the
uninformative function $\hat{f}^{(0)}\equiv 0$ may suffice. However, one should choose the initial values $\alpha^{(0)}$ and $\beta^{(0)}$
such that the theoretical phase increment $\mathbf{E}[\Delta{\phi_{t}}] = \alpha^{(0)} / (1- \beta^{(0)})$  is close to the empirical one.
This can be achieved by counting the number of cycles in the data as in Section~\ref{sec:HumanECG} (see also (\ref{acd:mean})).

\section{Simulations and Data Examples} \label{sec:SimulationsExamples}

In this section we apply the proposed algorithms to simulated data including a noisy R\"{o}ssler attractor and to human electrocardiogram recordings.

\subsection{Simulated Data} \label{sec:SimulatedData}

\begin{figure} 
\centering
\includegraphics[width=380pt, height=380pt]{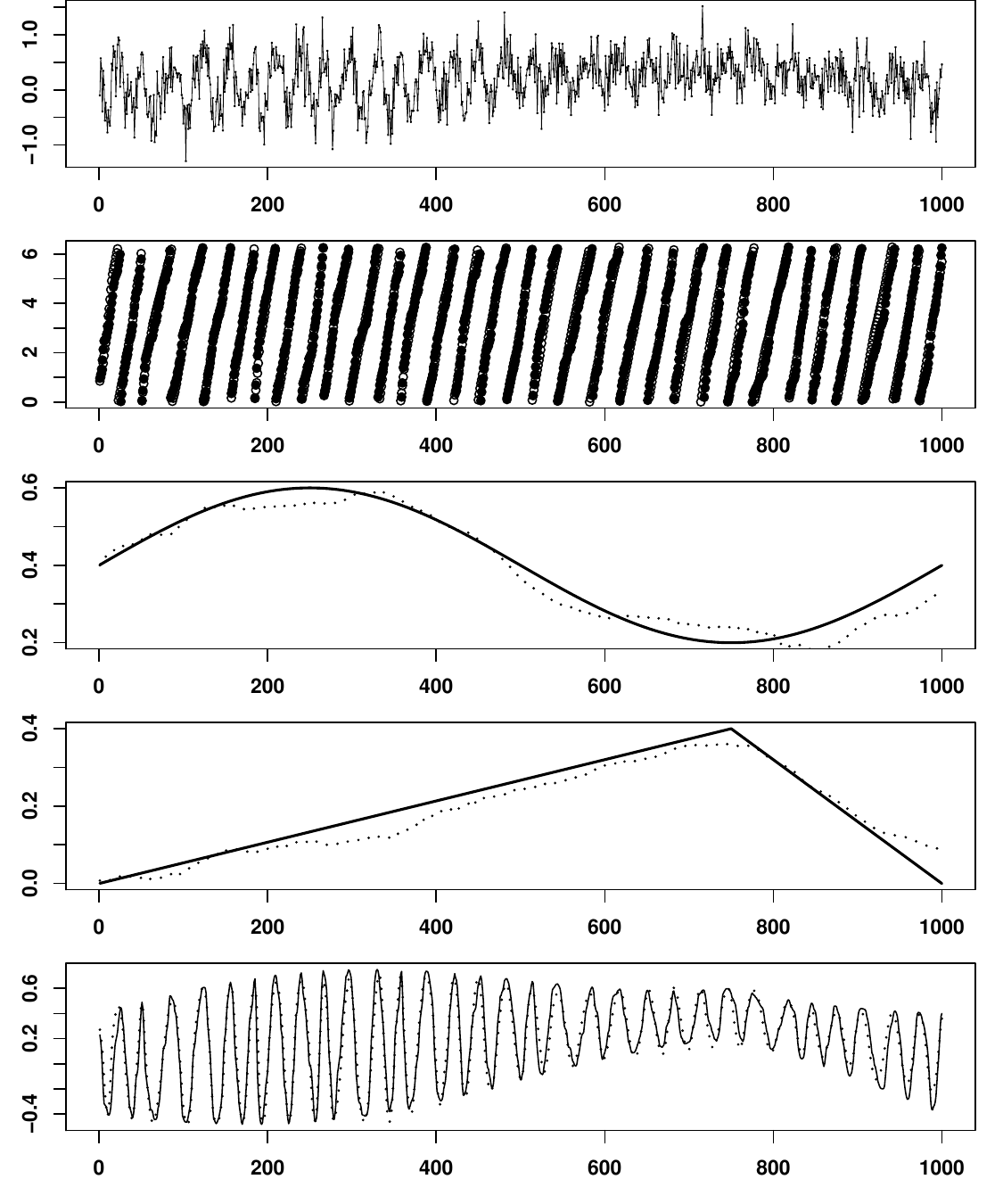}
\caption{\footnotesize
The estimation results of the RBPS for the simulated signal with
$\mathcal{N}(0, 0.16)$ noise (from top to bottom): The simulated noisy
observations; the folded estimated phase (circles) and the folded true phase
(solid circles); the estimated amplitude (dotted line) and the true amplitude
(solid line); the estimated baseline (dotted line) and the true baseline (solid
line); the simulated non-noisy signal (solid line) and the denoised signal obtained from the RBPS estimates
(dotted line).
}
\label{fig:EstimatesHighNoise}
\end{figure}

We generate observations $y_t$, $t=1,\ldots,1000$, from the GSSM
defined through (\ref{intro:acdstateequation}) and
\[
y_t = a_t \cos(\phi_t) + b_t + \varepsilon_t,
\]
with true $a_t = 0.2 \sin(2\pi t/1000) + 0.4$ and $b_t = 0.4t/750 \
\mathbf{1}_{t\leq 750} + (0.4 - 0.4 (t-750)/250) \ \mathbf{1}_{t > 750}$.
The ACD model parameters are set to $\alpha=0.2$ and $\beta = 0.01$. Two
levels of observation noise are investigated: $\sigma^2_{\varepsilon} = 0.01$
and $\sigma^2_{\varepsilon} = 0.16$. The parameters $(\alpha, \beta, \sigma^2_{\varepsilon}, \text{vec}(Q))$
are estimated with the (parametric) EM algorithm and we set
$A=\text{diag}(1,1)$. For both noise levels, the EM algorithm obtained
estimates $(\hat{\alpha}, \hat{\beta}, \hat{\sigma}^2_{\varepsilon})$ which were
very close to the true values after a few iterations. For $Q$, we obtained
$\text{diag}(10^{-4}, 5\times 10^{-5})$.

Figures \ref{fig:EstimatesHighNoise} shows
the true values and the estimated values for the high noise level (the low noise level even looks better).
The estimates of the amplitude, baseline, and (folded) phase are computed by the
RBPS with $N=500$ particles and lag $l=100$.
In addition, the figures display a signal reconstruction based on the estimates
(bottom plot) consisting of the estimated denoised observations $\hat{y}_t =
\hat{a}_t \cos{\hat{\phi}_t} + \hat{b}_t$. For comparison, also the non-noisy
observations $y_t - \varepsilon_t$ are given.
In the low-noise setting, the estimates are even more
accurate.

\subsection{The Noisy R\"{o}ssler Attractor} \label{sec:NoisyRoessler}

\begin{figure}
\centering
\includegraphics[width=380pt, height=350pt]{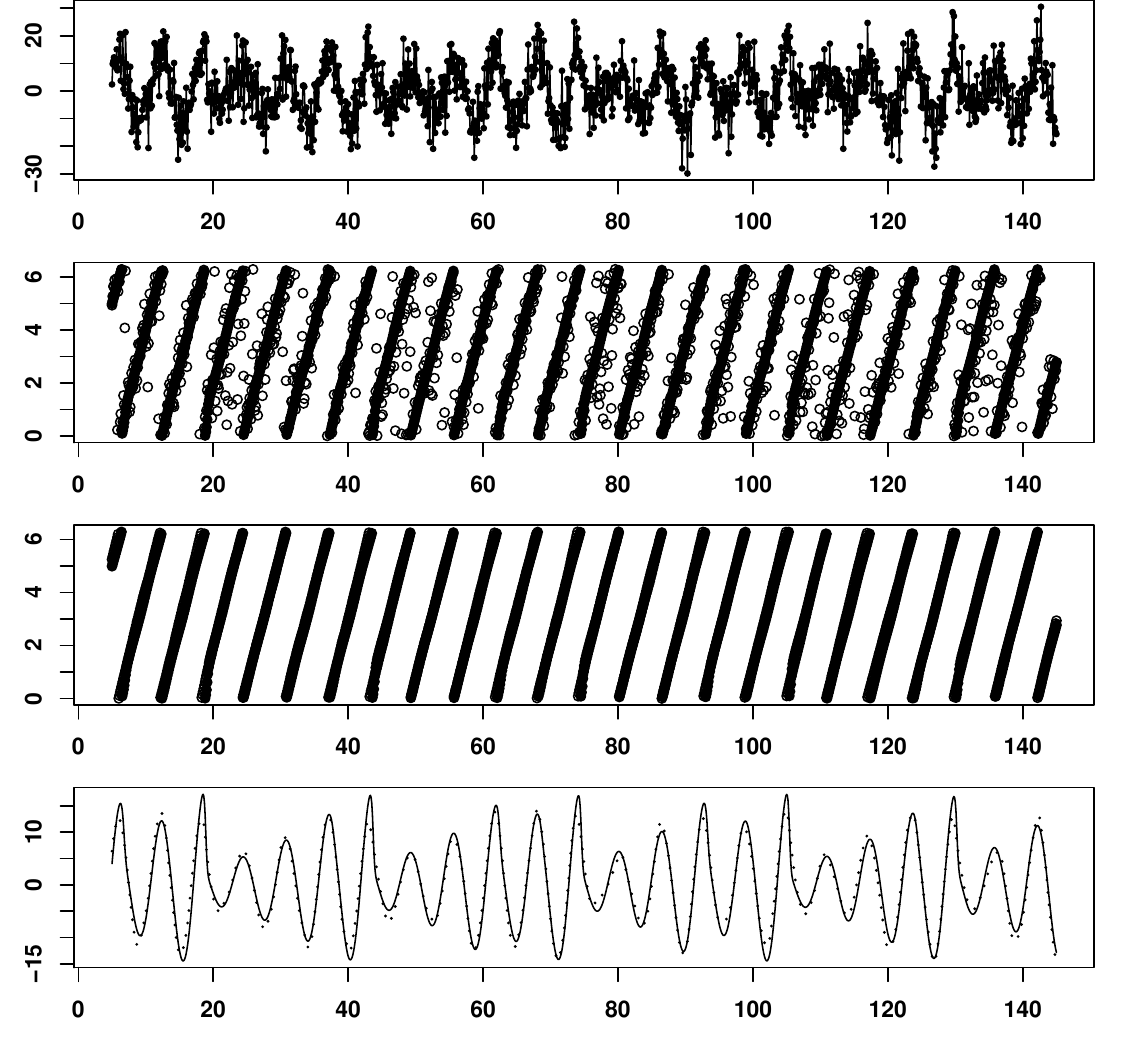}
\caption{\footnotesize Estimation results for the noisy R\"{o}ssler attractor. The
plots show (from top to bottom): $x_1$-component of the R\"{o}ssler attractor with
additive i.i.d. $\mathcal{N}(0,40)$ noise; the folded Hilbert phase (circles)
compared with the true folded phase (solid circles); the folded phase estimated
with the RBPS (circles) compared with the true folded phase (solid circles);
the (non-noisy) $x_1$-component of the R\"{o}ssler attractor compared with the
reconstructed (denoised) signal based on the amplitude and phase estimates of
the RBPS.}
\label{fig:RoesslerResultsVar40}
\end{figure}

We now consider the R\"{o}ssler attractor with  configuration
\begin{equation*}
\dot{x_1} = -x_2-x_3, \qquad
\dot{x_2} = x_1 + .15 x_2, \qquad
\dot{x_3} = .4 + x_3 ( x_1 - 8.5).
\end{equation*}
The R\"{o}ssler attractor and related systems are, for instance, used to model
population dynamics (Blasius et.al., 1999; Lloyd and May, 1999). We
focus on the $x_1$ component for which the (folded) phase can be defined by means of $\arctan(x_{2,t}/x_{1,t})$ (cf. Pikovsky et al., 1997).
It is assumed that $x_{1,t}$ is not observed directly but through $y_t = x_{1,t}
+ \varepsilon_t$. A standard method for estimating the phase is to apply the Hilbert transform  (cf. Rosenblum et. al., 1996).

We now use the cosine model
\begin{equation} \label{CosineModelRoessler}
y_t = a_t \cos(\phi_t) + \varepsilon_t
\end{equation}
in combination with the RBPS and the EM algorithm for estimation (i.e $b_t \equiv 0$).

We integrate the R\"{o}ssler system with step size $0.1$ using the Runge-Kutta
method (Press et al. 1992, pp. 710-714) and we add i.i.d. Gaussian noise
to the $x_1$-component. Again, two noise levels have been considered:
$\mathcal{N}(0,4)$ and $\mathcal{N}(0,40)$ (but we only display the high noise level - see Figure~\ref{fig:RoesslerResultsVar40}).
As parameter estimates we obtain $(\hat{\alpha}, \hat{\beta})^T = (0.2,
0.02)^T$, $\hat{Q} = \text{diag}(0.9, 0)$ (the second value is set to zero),
and $\hat{\sigma}^2_{\varepsilon}$ close to the true value. $A$ was set to $\text{diag}(1,0)$.
The RBPS is applied with $N=1000$ particles and lag $l=200$.
For the computation of the Hilbert phase a running window of 100 data points
is used. The (folded) phase estimates of the Hilbert transform and our method
together with the true phase are presented in the second and third plot of Figures~\ref{fig:RoesslerResultsVar40}
 (in the third plot the true phases and the estimates are almost identical). It can be observed, that the phase estimate of
the RBPS is much closer to the true phase than the Hilbert phase. The bottom
plot shows the (non-noisy) $x_1$-component of the R\"{o}ssler attractor along
with the denoised signal $\hat{y} = \hat{a}_t \cos(\hat{\phi}_t)$, where
$\hat{a}_t$ and $\hat{\phi}_t$ are obtained from the RBPS. Note, that even in
the high noise case, the denoised signal is very close to the true signal. The results in the
low noise case are even better.

\subsection{Application to Human Electrocardiogram Recordings} \label{sec:HumanECG}

Human ECG recordings are characterized by a specific oscillation pattern $f$,
amplitude changes, and baseline shifts. The oscillation pattern depends on certain characteristics of the specific human being. The baseline shifts are typically caused by
respiration or body movements (Clifford et.al. 2006). The estimation of the oscillation pattern $f$ is for example of importance for the diagnosis of several heart diseases - see Figure 3.6. and Table 3.1 in Clifford et.al. (2006). For inference we use the model
\begin{equation*} \label{}
y_t = a_t f(\phi_t) + b_t + \varepsilon_t,
\end{equation*}
i.e. the model (\ref{intro:observationequation})-(\ref{intro:acdstateequation}) with $\mu_a=1$, $\mu_b=0$ and unknown oscillation pattern $f$. We use ECG recordings obtained from the PhysioBank
database\footnote{\texttt{http://www.physionet.org/physiobank/}}.
The data are sampled at a frequency of 0.01 seconds for a duration of 10 seconds
(leading to 1000 observations) - see the top plot of Figure
\ref{fig:ecgDataEstimates}.
\begin{figure}[h!]
\centering
\includegraphics[width=380pt, height=320pt]{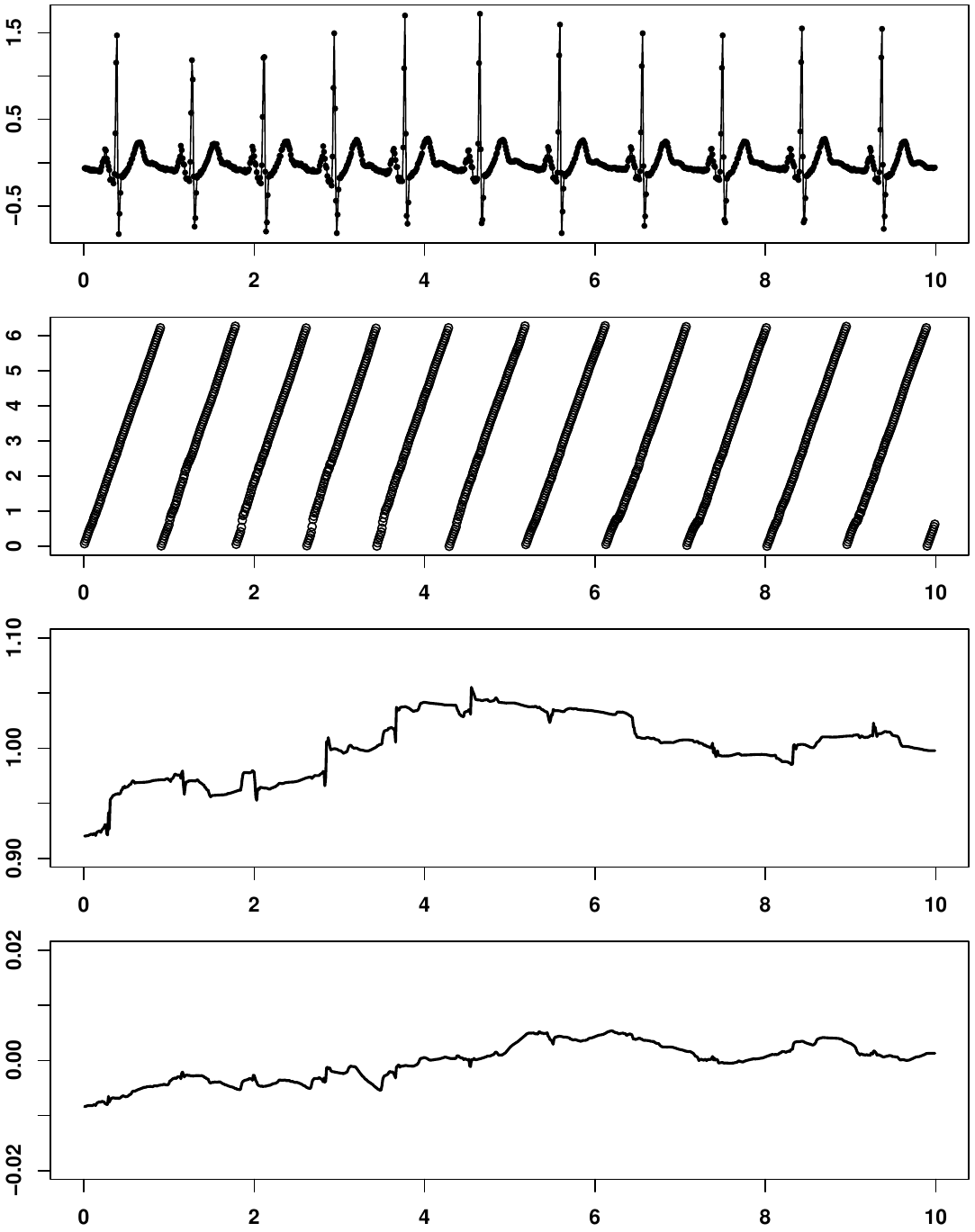}
\caption{\footnotesize Estimation results for the ECG recordings. The plots
show (from top to bottom): the ECG data points; the folded phase, the
amplitude, and the baseline estimated by the RBPS.}
\label{fig:ecgDataEstimates}
\end{figure}
The RBPS and the nonparametric EM algorithm are applied
to the data in order to obtain estimates for $\phi_t$, $a_t$, $b_t$, and $f$.
As initial oscillation pattern we use the trivial choice $\hat{f}^{(0)} \equiv
0$.
The only ``prior'' information we use is that the dataset covers roughly 11 cycles
leading an average increase of $\Delta{\phi_{t}}$ of about $2 \pi/90$. According to
this we choose as initial values in the first iteration step  $\beta^{(0)} = 0.1$ and
$\alpha^{(0)} = (1-\beta^{(0)}) 2 \pi /90$ (see (\ref{acd:mean}) and \ref{enumerate1:basiccycle}) in Section~\ref{sec:Identifiability}).
The estimates for the amplitude, baseline, and phase computed by the RBPS (applied with $N=100$ particles and $\ell=10$) are given in Figure
\ref{fig:ecgDataEstimates}.
\begin{figure}
\centering
\includegraphics[width=400pt,keepaspectratio]{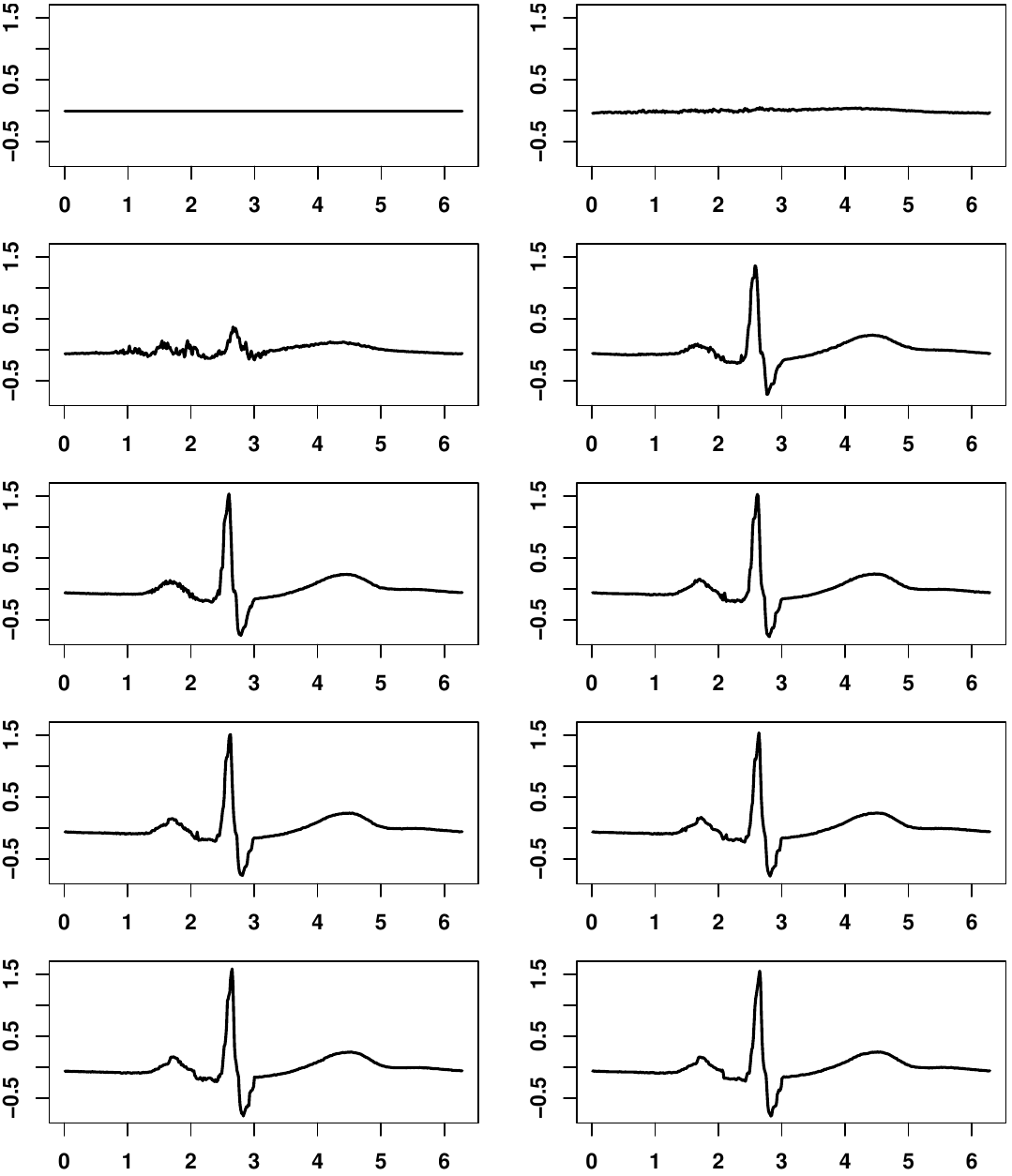}
\caption{\footnotesize The initial oscillation pattern $\hat{f}^{(0)}$ and
the estimated oscillation patterns $\check{f}^{(m)}$ for the iterations $m=1,
\ldots, 9$ of the nonparametric EM algorithm.}
\label{fig:ecgObsFuncEstC}
\end{figure}
It can be seen that the amplitude changes
significantly over time. In contrast, the baseline is almost constant.
The estimates of the oscillation
pattern $\check{f}^{(m)}$ for the iterations $m=1, \ldots, 9$ are
shown in Figure \ref{fig:ecgObsFuncEstC} (with $h=0.01$ and the Epanechnikov kernel in (\ref{NEM:optEstimator2})). Observe how rapidly the
estimates of the oscillation pattern converge. Finally, the estimated  $\check{f}^{(9)}$ is compared
with one period of the data in Figure \ref{fig:ecgObsFuncEst} (note that this is one oscillation and not the true curve). In this application we have chosen small $N$ and $l$ in order to demonstrate that this choice already leads to good results.

\begin{figure}
\centering
\includegraphics[width=395pt, height=85pt]{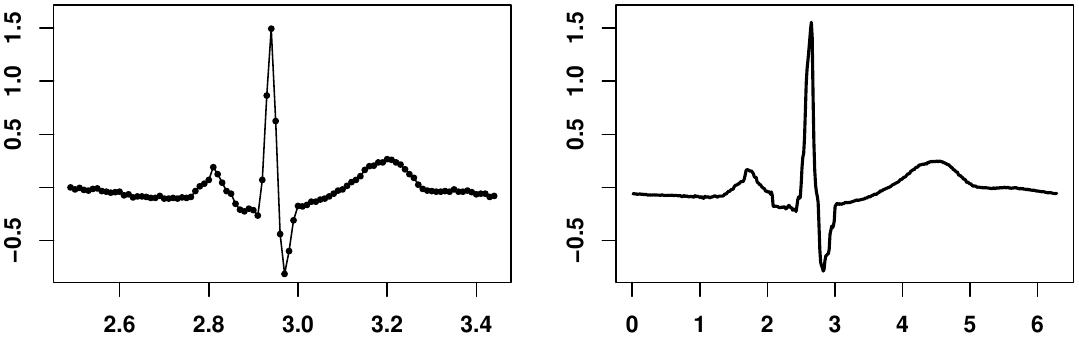}
\caption{\footnotesize Left plot: A fraction of the ECG recordings (note that this is one oscillation and not the true curve).
Right plot: Estimated oscillation pattern $\check{f}^{(9)}$ obtained after nine
iterations of the nonparametric EM algorithm.}
\label{fig:ecgObsFuncEst}
\end{figure}



\section{Concluding Remarks} \label{sec:Conclusion}

In this paper we have proposed a general model for oscillation processes with a quasi-periodic component.
The key ingredients are a nonparametric oscillation pattern $f$ and
the modeling of the unobserved phase process of the system by a nonlinear state space model. The situation
is challenging since the model is a nonparametric regression model with unobserved regressors. We have proven identifiability of the model for the specific phase model (\ref{StateEquation1}) which is the simplest possible phase model. We have chosen this model for a basic understanding but also as a framework for first stage mathematical investigations. The identifiability results of Section~\ref{sec:Identifiability} and Section~\ref{Appendix:Identifiability} have already been quite challenging for this model and the asymptotic properties of the nonparametric EM-algorithm introduced in Section~\ref{sec:NonparametricEstimate} are even in this model still unknown. Proving consistency and asymptotic normality of this nonparametric EM estimator is very challenging: using results similar to Olsson et al. (2008) a first step would be to prove that the MCEM - estimate of this paper is a valid approximation of the usual exact nonparametric EM algorithm (following Fort and Moulines, 2003, or Le Corff and Fort, 2013). Then, the asymptotic properties of the exact nonparametric EM algorithm had to be established for the present GSSM - which remains an open problem in this case since the state consists of an integrated process modulo $2 \pi$.

From an applied view we prefer the more general state equation (\ref{intro:acdstateequation}) which guarantees positivity of the phase increments and includes some dependence of consecutive phase increments. For this more general model we have estimated the unobserved phases by a computationally efficient
Rao-Blackwellized Particle Smoother (RBPS) which allows for simultaneous estimation of the amplitude, the baseline, and the phase. For the unknown oscillation pattern $f$ we have derived a nonparametric EM algorithm and applied it to ECG recordings. In another example the method has been  used for estimating the nonlinear phase of a noisy R\"{o}ssler attractor. The examples confirm that the RBPS provides in general a good estimate and maybe a worthwhile alternative to common methods for estimating the instantaneous frequency such as the Hilbert transform - in particular in cases of large observation noise. The good performance for noisy signals is due
to the fact that the observation noise is modeled explicitly.

In the ECG-application the algorithm needed about 30 minutes on a personal computer, i.e. 3 minutes per EM-step. We are optimistic that with massive parallelization over the particles and/or reduction of the lag of the fixed lag smoother the RBPS can be performed online. The EM algorithm may also be used pseudo-online
by always switching to new parameter values after a fixed number of time steps. A better option seems to develop a stochastic approximation EM algorithm as in Delyon et.al. (1999) for the present model.

\section{Appendix 1: Identifiability} \label{Appendix:Identifiability}

In this appendix we discuss identifiability of the model (\ref{BasicModel}),(\ref{StateEquation1}) in detail and prove Theorem~\ref{TheoremIdentifiability} and \ref{TheoremIdentifiability2} from Section~\ref{sec:Identifiability}. We first argue that we can assume without loss of generality in all proofs that the sequence $\{\idnonzero_i (f)\}_{i \in \mathbb{N}_+}$ from (\ref{DefKappas}) is setwise coprime.

\begin{proposition} \label{prop:coprime}
Suppose that $\{\idnonzero_i (f)\}_{i \in \mathbb{N}_+}$ is not setwise coprime. If $d$ is the greatest common divisor define $\bar{f}(x) := f\big(\frac{x}{d}\big)$ and $\bar{\phi}_t := d \phi_t$. Then $\bar{f}(\cdot)$ is $2\pi$-periodic with $c_{k}(\bar{f})\!=\!c_{kd}(f)$ and $\idnonzero_i(\bar{f})\!=\! \idnonzero_i(f) / d$, meaning that $\{\idnonzero_i (\bar{f})\}_{i \in \mathbb{N}_+}$ is setwise coprime. Furthermore, $\bar{\omega} = d \,\omega$ and $\bar{\sigma}_\eta = d \, \sigma_\eta$.
\end{proposition}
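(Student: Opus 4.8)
The plan is to exploit that, by definition of $d$, the Fourier support of $f$ lies entirely in the sublattice $d\Zset$, so that rescaling the argument by $d$ merely ``unfolds'' this sublattice back onto $\Zset$.

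First I would record the structural observation. Since $f\in\mathcal{F}$ is real one has $c_{-k}(f)=\overline{c_k(f)}$, so the set $\{k\in\Zset\setminus\{0\}:c_k(f)\neq0\}$ is symmetric and its positive part is exactly the range of $\{\idnonzero_i(f)\}$, whose greatest common divisor is $d$ by hypothesis. Hence $c_k(f)=0$ whenever $d\nmid k$, and the mean-square convergent Fourier series of $f$ reads $f(x)=\sum_{k\in\Zset}c_{kd}(f)\,\rme^{ikdx}$. From this I would read off two facts: each summand is $\tfrac{2\pi}{d}$-periodic, hence $f$ itself is $\tfrac{2\pi}{d}$-periodic and therefore $\bar f(x)=f(x/d)$ is $2\pi$-periodic; and substituting $x\mapsto x/d$ in the series gives $\bar f(x)=\sum_{k\in\Zset}c_{kd}(f)\,\rme^{ikx}$, so by uniqueness of Fourier coefficients $c_k(\bar f)=c_{kd}(f)$ for every $k$, which in particular shows $\{c_k(\bar f)\}\in\ell_2$, i.e. $\bar f\in\mathcal{F}$.

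Next I would deduce the statement on the indices. From $c_k(\bar f)=c_{kd}(f)$ one has $\{k\in\Nset_+:c_k(\bar f)\neq0\}=\{k/d:k\in\Nset_+,\,c_k(f)\neq0\}$, and since division by the positive integer $d$ is strictly monotone this bijection preserves the ordering used in (\ref{DefKappas}); hence $\idnonzero_i(\bar f)=\idnonzero_i(f)/d$ for each $i$ (the two sequences being simultaneously finite or infinite). Their greatest common divisor is then $d/d=1$, so $\{\idnonzero_i(\bar f)\}$ is setwise coprime. Finally, for the phase part, from $\bar\phi_t=d\phi_t$ and (\ref{GSSM-Identifiability}) I get $\Delta\bar\phi_t=d\,\Delta\phi_t=d\omega+d\eta_t$ with $d\eta_t\stackrel{\text{iid}}{\sim}\mathcal{N}(0,d^2\sigma_\eta^2)$, so $\bar\omega=d\omega$ and $\bar\sigma_\eta=d\sigma_\eta$; I would also note for completeness that $\bar f(\bar\phi_t)=f(\phi_t)$, so the observation sequence is unchanged and $\bar\phi_0\;\mathrm{mod}\;2\pi$ is still uniform on $[0,2\pi]$, which is what makes the reduction ``without loss of generality'' legitimate.

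I do not expect a real obstacle here. The only points needing a moment of care are the legitimacy of the reindexing of the Fourier series (harmless, since $\{c_k(f)\}\in\ell_2$ and convergence is in $L^2$, so passing to the sub-series supported on $d\Zset$ causes no problem), the case where $\{\idnonzero_i(f)\}$ happens to be a finite sequence (the argument goes through verbatim), and the bookkeeping that the rescaled quantities again constitute a model of the form (\ref{GSSM-Identifiability}).
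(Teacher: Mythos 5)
Your proof is correct and follows essentially the same route as the paper: expand the Fourier series of $f$ over the sublattice $d\mathbb{Z}$ (using $c_{-k}(f)=\overline{c_k(f)}$ to cover negative indices), substitute $x/d$ to read off $c_k(\bar f)=c_{kd}(f)$ and the reindexing $\idnonzero_i(\bar f)=\idnonzero_i(f)/d$, and rescale the phase increments to get $\bar\omega=d\omega$, $\bar\sigma_\eta=d\sigma_\eta$. You merely spell out details (symmetric support, order-preserving reindexing, invariance of the observations) that the paper dismisses as obvious.
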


\begin{proof}
Since $d$ is a common divisor of $\{\idnonzero_i (f)\}_{i \in \mathbb{N}_+}$ we have
\begin{equation*} \label{}
f(x) = \sum_{k\in\mathbb{Z}}c_k(f)\,\mathrm{e}^{ikx} =  \sum_{k\in\mathbb{Z}}c_{kd}(f)\,\mathrm{e}^{ikd x}
\end{equation*}
and therefore
\begin{equation*} \label{}
\bar{f}(x) = f\bigg(\frac{x}{d}\bigg)  = \sum_{k\in\mathbb{Z}}c_{kd}(f)\,\mathrm{e}^{ikx},
\end{equation*}
i.e. $\bar{f}(\cdot)$ is $2\pi$-periodic with $c_{k}(\bar{f})\!=\!c_{kd}(f)$, $d \idnonzero_i(\bar{f})\!=\! \idnonzero_i(f)$. The rest is obvious.
\end{proof}

We now define the autocovariance function of $Y_{k}$ by
\begin{equation*}
\Gamma^{\omega,\sigma^2_{\eta}}_{f,\sigma^2_{\varepsilon}}(\ell) \eqdef \mathbb{E}\left[Y_{\ell}Y_{0}\right]-\mathbb{E}\left[Y_0\right]^2\;.
\end{equation*}

\begin{lemma} \label{lem:autocov}
We have
\begin{equation} \label{ExpressionCovariances}
\Gamma^{\omega,\sigma^2_{\eta}}_{f,\sigma^2_{\varepsilon}}(\ell) = \left\{
\begin{array}{ll}
2\sum_{k=1}^{\infty}\left|c_{k}(f)\right|^2\cos(k\ell\omega)\,\mathrm{e}^{-\ell k^2\sigma_{\eta}^2/2}, & \ell \in \mathbb{N}_+ \\[6pt]
\sigma_{\varepsilon}^2 + 2 \sum_{k=1}^{+\infty}|c_{k}(f)|^2, & \ell = 0\,.
\end{array} \right.
\end{equation}
\end{lemma}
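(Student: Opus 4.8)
The plan is to substitute the Fourier expansion $f(\phi_t)=\sum_{k\in\mathbb{Z}}c_k(f)\mathrm{e}^{ik\phi_t}$ into $Y_t=f(\phi_t)+\varepsilon_t$ and exploit that $\phi_0\sim U[0,2\pi]$ together with the independence of $\{\varepsilon_t\}$ from $\{\phi_t\}$. First I would record the mean: since $\mathbb{E}[\mathrm{e}^{ik\phi_0}]=0$ for $k\neq0$ and $\mathbb{E}[\varepsilon_0]=0$, one gets $\mathbb{E}[Y_0]=c_0(f)$, hence $\mathbb{E}[Y_0]^2=|c_0(f)|^2$. For $\ell\ge1$, expanding $\mathbb{E}[Y_\ell Y_0]$ produces four terms; the two mixed terms (one factor $f(\cdot)$, one factor $\varepsilon$) vanish by independence and $\mathbb{E}[\varepsilon_t]=0$, and $\mathbb{E}[\varepsilon_\ell\varepsilon_0]=0$ since the $\varepsilon_t$ are i.i.d.\ and centered. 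Thus $\Gamma^{\omega,\sigma^2_\eta}_{f,\sigma^2_\varepsilon}(\ell)=\mathbb{E}[f(\phi_\ell)f(\phi_0)]-|c_0(f)|^2$ for $\ell\in\mathbb{N}_+$, while $\Gamma^{\omega,\sigma^2_\eta}_{f,\sigma^2_\varepsilon}(0)=\mathbb{E}[f(\phi_0)^2]+\sigma_\varepsilon^2-|c_0(f)|^2$.

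The core is the autocorrelation of $f$ along the phase. From the state equation, $\phi_\ell=\phi_0+\ell\omega+S_\ell$ with $S_\ell:=\sum_{s=1}^{\ell}\eta_s\sim\mathcal{N}(0,\ell\sigma_\eta^2)$ independent of $\phi_0$. Conditioning on $S_\ell$ and using independence, $\mathbb{E}[f(\phi_\ell)f(\phi_0)\mid S_\ell]=g(\ell\omega+S_\ell)$ where $g(t):=\frac{1}{2\pi}\int_0^{2\pi}f(u+t)f(u)\,\mathrm{d}u$ is the circular autocorrelation of $f$. Applying Plancherel on the torus to $u\mapsto f(u+t)$ (Fourier coefficients $c_j(f)\mathrm{e}^{ijt}$) and $f$, together with $c_{-j}(f)=\overline{c_j(f)}$ because $f$ is real, gives $g(t)=\sum_{j\in\mathbb{Z}}c_j(f)c_{-j}(f)\mathrm{e}^{ijt}=\sum_{j\in\mathbb{Z}}|c_j(f)|^2\mathrm{e}^{ijt}$. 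Taking expectation over $S_\ell$ and using $\mathbb{E}[\mathrm{e}^{ijS_\ell}]=\mathrm{e}^{-j^2\ell\sigma_\eta^2/2}$ yields $\mathbb{E}[f(\phi_\ell)f(\phi_0)]=\sum_{j\in\mathbb{Z}}|c_j(f)|^2\mathrm{e}^{ij\ell\omega}\mathrm{e}^{-\ell j^2\sigma_\eta^2/2}$. Splitting off $j=0$ and pairing $j$ with $-j$ (again $|c_{-j}(f)|=|c_j(f)|$), the right-hand side becomes $|c_0(f)|^2+2\sum_{k=1}^{\infty}|c_k(f)|^2\cos(k\ell\omega)\mathrm{e}^{-\ell k^2\sigma_\eta^2/2}$; subtracting $|c_0(f)|^2$ gives the first line of \eqref{ExpressionCovariances}. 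For $\ell=0$ the same identity at $t=0$ gives $\mathbb{E}[f(\phi_0)^2]=g(0)=\sum_{j}|c_j(f)|^2=|c_0(f)|^2+2\sum_{k\ge1}|c_k(f)|^2$, and adding $\sigma_\varepsilon^2$ and subtracting $|c_0(f)|^2$ produces the second line.

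The only genuinely delicate point is the legitimacy of the interchanges, since $\{c_k(f)\}$ is assumed to lie only in $\ell_2(\mathbb{Z})$, not in $\ell_1(\mathbb{Z})$. Routing the computation through the conditional autocorrelation $g$ essentially sidesteps this: the identity $g(t)=\sum_j|c_j(f)|^2\mathrm{e}^{ijt}$ is simply Plancherel, valid for any $L^2$ function, and it already shows $|g(t)|\le\|f\|_{L^2}^2$, which (via Cauchy--Schwarz in $\phi_0$, whose law is translation invariant on a period) also certifies that $f(\phi_\ell)f(\phi_0)$ is integrable and that $\mathbb{E}[f(\phi_\ell)f(\phi_0)]=\mathbb{E}[g(\ell\omega+S_\ell)]$ is finite. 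The single remaining interchange, pulling the expectation over $S_\ell$ inside the sum, is justified by dominated convergence because the partial sums of $\sum_j|c_j(f)|^2\mathrm{e}^{ij(\ell\omega+S_\ell)}$ are bounded in modulus by the summable constant $\sum_j|c_j(f)|^2$; alternatively one may truncate $f$ to the trigonometric polynomials $f_n=\sum_{|k|\le n}c_k(f)\mathrm{e}^{ik\cdot}$, for which everything is a finite sum, and pass to the limit using $f_n(\phi_0)\to f(\phi_0)$ and $f_n(\phi_\ell)\to f(\phi_\ell)$ in $L^2(\Omega)$, hence $f_n(\phi_\ell)f_n(\phi_0)\to f(\phi_\ell)f(\phi_0)$ in $L^1(\Omega)$. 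Everything else in the argument is elementary.
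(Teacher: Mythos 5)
Your proposal is correct and follows essentially the same approach as the paper: expand $f$ in its Fourier series, use the uniformity of $\phi_0$ to eliminate the off-diagonal (cross-frequency) terms, evaluate the Gaussian increment $\phi_\ell-\phi_0\sim\mathcal{N}(\ell\omega,\ell\sigma_\eta^2)$ through its characteristic function, and obtain the lag-$0$ value from Parseval. The only difference is organizational: you condition on $S_\ell$ and pass through the circular autocorrelation $g(t)=\frac{1}{2\pi}\int_0^{2\pi}f(u+t)f(u)\,\mathrm{d}u$ via Plancherel, whereas the paper exchanges the double Fourier sum with the expectation and integrates $e^{i(k_1+k_2)x}$ over $[0,2\pi]$ directly; your routing has the minor merit of making the interchange justification under the mere $\ell_2(\mathbb{Z})$ assumption explicit, which the paper leaves implicit.
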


\begin{proof}
Let $g_{\ell}(x)$ be the probability density function of $\phi_{\ell}-\phi_0 = \sum_{k=1}^{\ell}\eta_k + \omega \ell \sim \mathcal{N} (\omega \ell, \sigma_{\eta}^2 \ell)$. Since $f$ is real, we have with $c_{k}:=c_{k}(f)$ for all $k\ge 1\;$ $c_{-k} = \overline{c_{k}}$  and therefore for $\ell \in \mathbb{N}_+$
\begin{align*}
\Gamma^{\omega,\sigma^2_{\eta}}_{f,\sigma^2_{\varepsilon}}(\ell) &= \mathbb{E}\left[f(\phi_{\ell})f(\phi_0)\right]-\mathbb{E}\left[f(\phi_0)\right]^2 =  \sum_{k_1,k_2=-\infty}^{\infty} \!c_{k_1}c_{k_2}\mathbb{E}\left[\mathrm{e}^{ik_1\phi_{\ell}}\,\mathrm{e}^{ik_2\phi_0}\right]-|c_{0}|^2\\
&=  \frac{1}{2\pi}\sum_{k_1,k_2=-\infty}^{\infty} \!c_{k_1}c_{k_2}\int_{[0,2\pi]\times\mathbb{R}}\,\mathrm{e}^{ik_1x}\,\mathrm{e}^{ik_2(x+y)}g_{\ell}(y)\, \rmd x \,\rmd y-|c_{0}|^2\\
&=  \sum_{k=-\infty}^{\infty}\left|c_{k}\right|^2\,\mathrm{e}^{ik\ell\omega}\,\mathrm{e}^{-\ell k^2\sigma_{\eta}^2/2}-|c_{0}|^2 = 2\sum_{k=1}^{\infty}\left|c_{k}\right|^2\cos(k\ell\omega)\,\mathrm{e}^{-\ell k^2\sigma_{\eta}^2/2}\;.
\end{align*}
by using the characteristic function of the normal distribution. The derivation for $\Gamma^{\omega,\sigma^2_{\eta}}_{f,\sigma^2_{\varepsilon}}(0)$ is straightforward (use e.g. the Parseval equality).
\end{proof}

We now prove Theorem~\ref{TheoremIdentifiability} and at the same time the second relation from (\ref{RelationGamma2}).
\medskip

\noindent \textbf{Proof of Theorem~\ref{TheoremIdentifiability}.} \hfill \\
\noindent We set $c_{k}^{\star} := c_{k}(f_\star)$, $\idnonzero_i^{\star} := \idnonzero_i(f_\star)$, and $\idnonzero_i := \idnonzero_i(f)$.

\begin{enumerate}[1)]
\item {\bf Identifiability of $\sigma_{\eta \star}^2$ and definition of $\gamma$:}\label{sec:iden:sigma}\\
$\Gamma^{\omega,\sigma^2_{\eta}}_{f,\sigma^2_{\varepsilon}}(\ell)=\Gamma^{\omega_\star,\sigma^2_{\eta \star}}_{f_\star,\sigma^2_{\varepsilon \star}}(\ell)$ for all $\ell\ge 0$ implies
\[
\sum_{k=1}^{\infty}\left|c_{k}(f)\right|^2\cos(k\ell\omega)\,\mathrm{e}^{-\ell k^2\sigma^2/2}=\sum_{k=1}^{\infty}\left|c_{k}^{\star}\right|^2\cos(k\ell\omega_{\star})\,\mathrm{e}^{-\ell k^2\sigma_{\eta \star}^2/2}\;.
\]
By definition of $\idnonzero_1$ and $\idnonzero^{\star}_1$,  for all $\ell\ge 1$
\begin{multline} \label{eq:nphmm:lhsrhs}
|c_{\idnonzero_1}(f)|^2\cos(\idnonzero_1\ell\omega )+\sum_{k=\idnonzero_1+1}^{\infty}\left|c_{k}(f)\right|^2\cos(k\ell\omega)\,\mathrm{e}^{-\ell (k^2-\idnonzero_1^2)\sigma_\eta^2/2}\\
=\mathrm{e}^{-\ell/2 (\idnonzero^{\star 2}_1 \sigma_{\eta \star}^2-  \idnonzero_1^2 \sigma_\eta^2)}\bigg(|c_{\idnonzero^\star_1}^{\star}|^2\cos(\idnonzero^\star_1 \ell \omega_{\star})+\sum_{k=\idnonzero^\star_1+1}^{\infty}\left|c_{k}^{\star}\right|^2\cos(k\ell\omega_{\star})\,\mathrm{e}^{-\ell (k^2-\idnonzero^{\star 2}_1)\sigma_{\eta \star}^2/2}\bigg)\;.
\end{multline}
Whatever $\omega$, $\limsup_{\ell\to +\infty} \cos(\idnonzero_1 \omega \ell) =1$ \big(if $\idnonzero_1\omega\in\pi\mathbb{Q}$, the set $\left\{\cos(\idnonzero_1\omega\ell) ;\; \ell\ge 1\right\}$ is finite and $\cos(\idnonzero_1\omega\ell)$ equals one for infinitely many $\ell$; if $\idnonzero_1\omega \notin\pi\mathbb{Q}$, $\idnonzero_1\omega\mathbb{Z}+2\pi\mathbb{Z}$ is a dense subset of $\mathbb{R}$ and $[-1,1]$ is the set of limit points of $\left\{\cos(\idnonzero_1\omega\ell) ;\; \ell\ge 1\right\}$\big). Therefore,
the $\limsup$ of the left hand side of $\eqref{eq:nphmm:lhsrhs}$ is $|c_{\kappa_1}(f)|$ while the one of the right hand side is $0$ if $\kappa_1^{\star}\sigma_{\eta \star} > \kappa_1\sigma_\eta$ or $+\infty$ if $\kappa_1^{\star}\sigma_{\eta \star} < \kappa_1\sigma_\eta$.
Therefore, $\idnonzero_1^2 \sigma_\eta^2 = \idnonzero^{\star,2}_1 \sigma_{\eta \star}^2$ and $|c_{\idnonzero^\star_1}^{\star}|=|c_{\idnonzero_1}(f)|$. Define
\begin{equation} \label{DefinitionGamma}
\gamma \eqdef\frac{\idnonzero_1}{ \idnonzero^{\star}_1}  = \frac{\sigma_{\eta \star}}{\sigma_\eta}\;.
\end{equation}
\item {\bf Identifiability of $\omega_{\star}$ and $\{|c_k^{\star}|\}_{k\ge 0}$:}\label{sec:iden:omega}\\
We obtain from (\ref{eq:nphmm:lhsrhs}) with $\gamma \sigma_\eta =\sigma_{\eta \star}$
\begin{multline}
\label{eq:identifiability:step2}
|c_{\idnonzero_1}(f)|^2\cos(\idnonzero_1\ell\omega )+\sum_{k=\idnonzero_1+1}^{\infty}\left|c_{k}(f)\right|^2\cos(k\ell\omega)\,\mathrm{e}^{-\ell (k^2-\idnonzero_1^2)\sigma_\eta^2/2}\\
= |c_{\idnonzero^\star_1}^{\star}|^2\cos(\idnonzero^\star_1 \ell \omega_{\star})+\sum_{k=\idnonzero^\star_1+1}^{\infty}\left|c_{k}^{\star}\right|^2\cos(k\ell\omega_{\star})\,\mathrm{e}^{-\ell (k^2-\idnonzero^{\star,2}_1)\sigma_{\eta \star}^2/2} \;.
\end{multline}
Then, since $|c_{\idnonzero^\star_1}^{\star}|=|c_{\idnonzero_1}(f)|$,
\begin{align}
2\sin \Big(\frac{\gamma\omega+ \omega_{\star}}{2}\idnonzero^\star_1\ell\Big)\sin\Big(\frac{\gamma\omega- \omega_{\star}}{2}\idnonzero^\star_1\ell\Big)&  =
\cos(\idnonzero^\star_1 \ell \omega_{\star})-\cos(\idnonzero_1\ell\omega ) \label{eq:identifiability:sinus}\\[4pt]
& \hspace{-6.2cm}= \!\sum_{k=\idnonzero_1+1}^{\infty} \!\frac{\left|c_{k}(f)\right|^2}{|c_{\idnonzero^\star_1}^{\star}|^2}\cos(k\ell\omega)\,\mathrm{e}^{-\ell (k^2-\idnonzero_1^2)\sigma_\eta^2/2} -\sum_{k=\idnonzero^\star_1+1}^{\infty}\!\frac{\left|c_{k}^{\star}\right|^2}{|c_{\idnonzero^\star_1}^{\star}|^2}\cos(k\ell\omega_{\star})
\,\mathrm{e}^{-\ell(k^2-\idnonzero^{\star 2}_1)\sigma_{\eta \star}^2/2}.\label{eq:identifiability:omega}
\end{align}
If $\frac{\gamma\omega+ \omega_{\star}}{2} \idnonzero^\star_1 \notin \pi\mathbb{Z}$ and $\frac{\gamma\omega - \omega_{\star}}{2} \idnonzero^\star_1 \notin \pi\mathbb{Z}$, \eqref{eq:identifiability:sinus} does not converge to $0$ as $\ell \to \infty$ while \eqref{eq:identifiability:omega} does. Therefore there exists $k_1 \in \mathbb{Z}$ such that either $\gamma\omega+ \omega_{\star} = \frac {2\pi} {\idnonzero^\star_1} \, k_1$ or $\gamma\omega - \omega_{\star} = \frac {2 \pi} {\idnonzero^\star_1}\,k_1$. This implies $\cos(\idnonzero_1\ell\omega ) = \cos(\idnonzero^\star_1 \ell \omega_{\star})$ and, combined with  $|c_{\idnonzero_1}(f)| = |c_{\idnonzero^\star_1}^{\star}|$ and \eqref{eq:identifiability:step2}:
\begin{equation*}
 \sum_{k=\idnonzero_1+1}^{\infty}\left|c_{k}(f)\right|^2\cos(k\ell\omega)\,\mathrm{e}^{-\ell (k^2-\idnonzero_1^2)\sigma_\eta^2/2}
=  \sum_{k=\idnonzero^\star_1+1}^{\infty}\left|c_{k}^{\star}\right|^2\cos(k\ell\omega_{\star})\,\mathrm{e}^{-\ell (k^2-\idnonzero^{\star 2}_1)\sigma_{\eta \star}^2/2} \,.
\end{equation*}
Note, that $\kappa^\star_i$ is the $i^{th}$ non-zero coefficient of $c_k^\star$. Iterating these steps recursively proves for all $i \!\in \! \mathbb{N}_+\,$ $\idnonzero_i = \gamma \idnonzero^\star_i$ and
\begin{equation*}
 \sum_{k=\idnonzero_i+1}^{\infty}\left|c_{k}(f)\right|^2\cos(k\ell\omega)\,\mathrm{e}^{-\ell (k^2-\idnonzero_i^2)\sigma_\eta^2/2}
=  \sum_{k=\idnonzero^\star_i+1}^{\infty}\left|c_{k}^{\star}\right|^2\cos(k\ell\omega_{\star})\,\mathrm{e}^{-\ell (k^2-\idnonzero^{\star 2}_i)\sigma_{\eta \star}^2/2} \,.
\end{equation*}
This implies $|c_{\idnonzero_i}(f)|\!=\! |c_{\idnonzero^\star_i}^{\star}|$ for all $i \!\in \! \mathbb{N}_+$ (all other coefficients are $0$) and $\mean \left[Y_{k}\right] \!=\!c_{0} (f) \!=\! c_{0}^{\star}$.

We now show that $\gamma = 1$ if each of the sequences $\{\idnonzero_i\}_{i \in \mathbb{N}_+}$ and $\{\idnonzero_i^{\star}\}_{i \in \mathbb{N}_+}$ is setwise coprime (see Proposition~\ref{prop:coprime}). Then there exist $p\ge 1$ and $(a_1,\dots,a_p)'\in\mathbb{Z}^p$ such that
\begin{equation} \label{eq:sumcoprime}
\sum_{i=1}^p a_i\kappa_i^{\star} = 1\,.
\end{equation}
Therefore,
\[
\gamma = \gamma \sum_{i=1}^p a_i\kappa_i^{\star} =\sum_{i=1}^p a_i\kappa_i\in\mathbb{Z}\,.
\]
Since for all $i\ge 1\;$ $\gamma \kappa_i^{\star} = \kappa_i$, $\gamma$ is a divisor of all the $\kappa_i$'s implying $\gamma =1$.

We also obtain from above the existence of a sequence $\ell_i \in \mathbb{Z}$ such that either \linebreak $(\omega - \omega_{\star}) \idnonzero^\star_i \!=\! 2\pi \, \ell_i$ (*) or $(\omega + \omega_{\star}) \idnonzero^\star_i \!=\! 2 \pi\,\ell_i$ (**). Suppose first that (*) holds for some index $i$ and (**) holds for some index $j \neq i$. Then
$\omega_{\star} \!=\! \pi \big(\frac {\ell_j} {\idnonzero_j^{\star}} -\frac {\ell_i} {\idnonzero_i^{\star}}\big)$ which is in contradiction to $\omega_{\star} \notin \pi \mathbb{Q}$. This means that either (*) or (**) must hold simultaneously for all $i$.

Suppose now that (*) holds for all $i$. Then (\ref{eq:sumcoprime}) implies
\begin{equation*} \label{}
\omega - \omega_{\star} =  (\omega - \omega_{\star}) \sum_{i=1}^{p} a_i \idnonzero^\star_i \!=\! 2\pi \, \sum_{i=1}^{p} a_i \ell_i \in 2\pi \mathbb{Z}.
\end{equation*}
The assumption $\,\omega,\omega_{\star}  \!\in\! (0,\pi)$ then implies $\omega = \omega_{\star}$.

Finally, suppose that (**) holds for all $i$. This leads in the same way to $\omega + \omega_{\star} \in 2\pi \mathbb{Z}$ which is in contradiction with $\omega,\omega_{\star}  \!\in\! (0,\pi)$. Thus, we finally obtain $\omega = \omega_{\star}$.

\item {\bf Identifiability of $\sigma_{\varepsilon \star}^{2}$:}\label{sec:iden:sigmaeps}\\
The identifiability of $\sigma_{\varepsilon \star}^{2}$ then is a direct consequence of \eqref{ExpressionCovariances}.

\item {\bf Identifiability of $f_{\star}$:}\label{sec:iden:fstar}\\
We need to show that $c_k(f)=c_k^{\star}$ for all $k \in \mathbb{N}_0$. We know already know from \ref{sec:iden:omega}) that $c_0(f)=c_0^{\star}$ and $|c_k(f)|=|c_k^{\star}|$ for all $k \in \mathbb{N}_+$. The identification of the $c_k(f)$ for all $k\ge 0$ can be obtained using the higher order moments: for all $p\ge 1$ and all $(\ell_1,\dots,\ell_p)\in \mathbb{N}_{+}^p$, we define
\begin{equation} \label{DefinitionLargePsi}
\Psi_{f_{\star},p}^{\omega_{\star},\sigma^2_{\eta \star}}(\ell_1,\dots,\ell_p)\eqdef \mathbb{E}\left[f_{\star}(\phi_0)f_{\star}(\phi_{\ell_1})\dots f_{\star}(\phi_{\ell_1+\dots+\ell_p})\right]\;.
\end{equation}
As the finite dimensional distributions of $\{Y_k\}_{k \in \mathbb{N}_+}$ are the same and the $\{\varepsilon_k\}_{k \in \mathbb{N}_+}$ are independent centered Gaussian random variables (implying that with $\sigma_{\varepsilon}^{2}$ also all moments  $\mathbb{E} \varepsilon_t^{\ell}$ are identifiable), we know that for all $p\ge 1$ and all $\boldsymbol{\ell}_p\in\mathbb{N}_{+}^p$ , $\Psi_{f,p}^{\omega,\sigma^2}(\boldsymbol{\ell}_p) = \Psi_{f_{\star},p}^{\omega_{\star},\sigma^2_{\eta \star}}(\boldsymbol{\ell}_p)$. Let further
\begin{equation} \label{eq:characteristic}
\psi^{\omega_{\star},\sigma^2_{\eta \star}}_{\ell}(k) \eqdef \mathrm{exp}\left(ik\ell\omega_{\star} - \frac{\sigma_{\eta \star}^2}{2}\ell k^2\right)\;
\end{equation}
be the characteristic function of a Gaussian random variable with mean $\ell \omega_{\star}$ and variance $\ell \sigma^2_{\eta \star}$. Write, for all $-\infty< k_1,\dots, k_p< \infty$,
\begin{align*}
d^{\star}_{k_1,\dots,k_p}&\eqdef c^{\star}_{-(k_1+\dots+k_p)}\,c^{\star}_{k_1}\dots c^{\star}_{k_p}\\d_{k_1,\dots,k_p}&\eqdef c_{-(k_1+\dots+k_p)}(f)\,c_{k_1}(f)\dots c_{k_p}(f)\;.
\end{align*}
We know from \ref{sec:iden:omega}) that $\sigma^2=\sigma_{\eta \star}^2$ and $\omega = \omega_{\star}$. Then for all $p\ge 1$ and all $\boldsymbol{\ell}_p\in\mathbb{N}_{+}^p$, $\Psi_{f_{\star},p}^{\omega_{\star},\sigma^2_{\eta \star}}(\boldsymbol{\ell}_p) =\Psi_{f,p}^{\omega_\star,\sigma^2_{\eta \star}}(\boldsymbol{\ell}_p)$ implies because of Lemma~\ref{lem:psi}
\[
\sum_{-\infty< k_1,\dots,k_p<\infty}\psi^{\omega_{\star},\sigma^2_{\eta \star}}_{\ell_1}(k_1+\dots+k_p)\dots\psi^{\omega_{\star},\sigma^2_{\eta \star}}_{\ell_p}(k_p)\left(d^{\star}_{k_1,\dots,k_p}-d_{k_1,\dots,k_p}\right)=0\;.
\]
Notice that, since $\{c^\star_k\}_{k\ge 0}$ and $\{c_k\}_{k\ge 0}$ belong to $\ell_2(\mathbb{Z})$,  the coefficients $d^{\star}_{k_1,\dots,k_p}-d_{k_1,\dots,k_p}$, $k_1,\ldots,k_p \in\mathbb{Z}$ are bounded. Therefore, by Lemma~\ref{lem:psi:multiple:invert}, for all $-\infty< k_1,\dots,k_p<\infty$, $d^{\star}_{k_1,\dots,k_p}=d_{k_1,\dots,k_p}$.
Due to (\ref{eq:sumcoprime}) we can decompose any  $k\in\mathbb{Z}$ as $k=ka_1\idnonzero_1+\dots+ka_p\idnonzero_p $. Thus  $d^{\star}_{\idnonzero_1,\dots,\idnonzero_p}=d_{\idnonzero_1,\dots,\idnonzero_p}$ yields
\[
c^{\star}_{-k}(c^{\star}_{\idnonzero_1})^{a_1k}\dots (c^{\star}_{\idnonzero_p})^{a_pk}=c_{-k}(f)(c_{\idnonzero_1}(f))^{a_1k}\dots (c_{\idnonzero_p}(f))^{a_pk}\;.
\]
Therefore,
\[
c_{-k}(f) = c^{\star}_{-k} \left[\frac{(c^{\star}_{\idnonzero_1})^{a_1}\dots (c^{\star}_{\idnonzero_p})^{a_p}}{(c_{\idnonzero_1}(f))^{a_1}\dots (c_{\idnonzero_p}(f))^{a_p}}\right]^k\;.
\]
As, for all $1\le i \le d$ $|c_{\idnonzero_i}(f)| = |c^{\star}_{\idnonzero_i}|\neq 0$, there exists $\theta \in [0,2\pi)$ such that
\[
c_{-k}(f) = c^{\star}_{-k}\,\mathrm{e}^{-ik\theta}\;.
\]
which completes the proof.  \hfill $\Box$
\end{enumerate}

\noindent \textbf{Proof of Theorem~\ref{TheoremIdentifiability2}.} \hfill \\
\noindent Let $f_{\star}$ be a non-constant oscillation pattern and
\begin{equation} \label{DefinitionReplication3}
\nu(f_{\star}) =  \max \big\{ j \in \mathbb{N}_+ \,\big| \,c_k(f_{\star}) = 0 \;\; \forall \; k \neq j\mathbb{N}_+ \big\}\,.
\end{equation}
Note that $\nu(f_{\star}) < \infty$ (otherwise $f_{\star}$ were constant). We now prove that $\nu(f_{\star}) = \textnormal{\small{repl}}(f_{\star})$. Since $c_k(f_{\star}) = 0$ apart from $k = \ell \, \nu(f_{\star})$, with some $\ell \in \mathbb{Z}$,
we obtain
\begin{equation*} \label{}
\bar{f_{\star}}(x) := f_{\star}\bigg(\frac{x}{\nu(f_{\star})}\bigg)  = \sum_{\ell\in\mathbb{Z}}c_{\ell \, \nu(f_{\star})}(f_{\star})\,\mathrm{e}^{i (\ell \, \nu(f_{\star})) \frac {x} {\nu(f_{\star})}} = \sum_{\ell\in\mathbb{Z}}c_{\ell \, \nu(f_{\star})}(f_{\star})\,\mathrm{e}^{i \ell \,x}\,,
\end{equation*}
implying that $\bar{f_{\star}}$ is $2\pi$-periodic and $\textnormal{\small{repl}}(f_{\star}) \ge \nu (f_{\star})$. The sequence $\{\kappa_i (\bar{f_{\star}})\}_{i \in \mathbb{N}_+}$ must be setwise coprime (if the sequence had a common factor $m$ also $m \, \nu(f_{\star})$ would fulfill the above requirement and $\nu(f_{\star})$ were not the maximum).

Suppose now that $f$ is another oscillation pattern and $\nu(f)$ and $\bar{f}$ are defined as above. Then it follows from \ref{sec:iden:omega}) above that there exists a $\gamma$ with $\kappa_i (\bar{f}) = \gamma \kappa_i (\bar{f_{\star}})$ for all $i \in \mathbb{N}_+$. Since both sequences
$\{\kappa_i (\bar{f_{\star}})\}_{i \in \mathbb{N}_+}$ and $\{\kappa_i (\bar{f})\}_{i \in \mathbb{N}_+}$ are setwise coprime it follows as in \ref{sec:iden:omega}) that $\gamma=1$. As in \ref{sec:iden:fstar}) we therefore obtain $\bar{f}(x-\theta) = \bar{f}_{\star}(x)$ with some $\theta \in (0,2\pi)$ and $f(x) = \bar{f}_{\star}\big(\nu(f)\, x + \theta\big)$. In particular we have
$\nu (f_{\star}) = \textnormal{\small{repl}}(f_{\star})$ and the basic cycle is unique.

Since $f_{\star}\big(\frac{x}{\nu(f_{\star})}\big) = \bar{f_{\star}}(x) = \bar{f}(x-\theta) = f \big(\frac{x-\theta}{\nu(f)}\big)$ we obtain from Theorem~\ref{TheoremIdentifiability}
\begin{equation*} \label{}
\gamma = \frac {\nu(f)} {\nu (f_{\star})} = \frac {\textnormal{\small{repl}}(f)} {\textnormal{\small{repl}}(f_{\star})}. \tag*{$\Box$}
\end{equation*}

\subsection{Technical Lemmata} \label{Appendix:TechnicalLemmata}
The proof of the following results are not difficult and are therefore omitted.
\begin{lemma}\label{lem:psi}
Let $\Psi_{f_{\star},p}^{\omega_{\star},\sigma^2_{\eta \star}}(\ell_1,\dots,\ell_p)$ and $\psi^{\omega_{\star},\sigma^2_{\eta \star}}_{\ell}$ be as in (\ref{DefinitionLargePsi}) and (\ref{eq:characteristic}) respectively. Then for all $(\ell_1,\dots,\ell_p)\in \mathbb{N}_{+}^p$,
\begin{equation}
\label{eq:psi}
\Psi_{f_{\star},p}^{\omega_{\star},\sigma^2_{\eta \star}}(\ell_1,\dots,\ell_p)  = \!\!\!\!\!\!\!\! \sum_{-\infty< k_1,\dots,k_p<\infty} \!\!\!\! \!\!\!\! c^{\star}_{-(k_1+\dots+k_p)}\,c^{\star}_{k_1}\dots c^{\star}_{k_p}\,\psi^{\omega_{\star},\sigma^2_{\eta \star}}_{\ell_1}(k_1+\dots+k_p)\dots\psi^{\omega_{\star},\sigma^2_{\eta \star}}_{\ell_p}(k_p)\;.
\end{equation}
\end{lemma}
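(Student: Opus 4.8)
The plan is to mirror the computation in the proof of Lemma~\ref{lem:autocov}, now carried out with $p+1$ factors. Set $m_0 := 0$ and $m_j := \ell_1 + \dots + \ell_j$ for $1 \le j \le p$, so that $\Psi_{f_\star,p}^{\omega_\star,\sigma^2_{\eta \star}}(\ell_1,\dots,\ell_p) = \mathbb{E}\big[\prod_{j=0}^p f_\star(\phi_{m_j})\big]$. Write $D_t := \phi_{m_t} - \phi_{m_{t-1}} = \sum_{s=m_{t-1}+1}^{m_t}(\omega_\star + \eta_s)$ for $1 \le t \le p$; the $D_t$ are independent, $D_t \sim \mathcal{N}(\ell_t\omega_\star,\, \ell_t \sigma^2_{\eta \star})$, independent of $\phi_0 \sim U[0,2\pi]$, and $\phi_{m_j} = \phi_0 + D_1 + \dots + D_j$.

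First I would substitute the Fourier expansion $f_\star(x) = \sum_{n\in\mathbb{Z}} c^\star_n \mathrm{e}^{inx}$ (with $c^\star_n := c_n(f_\star)$) into each factor, expand the product, and integrate term by term, obtaining
\[
\mathbb{E}\Big[\prod_{j=0}^p f_\star(\phi_{m_j})\Big] = \sum_{n_0,\dots,n_p \in \mathbb{Z}} c^\star_{n_0}\cdots c^\star_{n_p}\, \mathbb{E}\Big[\exp\Big(i\sum_{j=0}^p n_j \phi_{m_j}\Big)\Big].
\]
Since $\phi_{m_j} = \phi_0 + D_1 + \dots + D_j$, one has $\sum_{j=0}^p n_j \phi_{m_j} = \big(\sum_{j=0}^p n_j\big)\phi_0 + \sum_{t=1}^p \big(\sum_{j=t}^p n_j\big) D_t$. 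As $\phi_0$ is uniform on $[0,2\pi]$ and independent of the $D_t$, the factor $\mathbb{E}[\mathrm{e}^{i(\sum_j n_j)\phi_0}]$ equals $1$ when $\sum_{j=0}^p n_j = 0$ and $0$ otherwise, so only the terms with $n_0 = -(n_1+\dots+n_p)$ survive; and by independence of the $D_t$ the remaining expectation factorizes as $\prod_{t=1}^p \mathbb{E}\big[\mathrm{e}^{i(\sum_{j=t}^p n_j)D_t}\big] = \prod_{t=1}^p \psi^{\omega_\star,\sigma^2_{\eta \star}}_{\ell_t}\big(\sum_{j=t}^p n_j\big)$ by the definition (\ref{eq:characteristic}). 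Relabelling $k_j := n_j$ for $1 \le j \le p$ and noting $\sum_{j=t}^p n_j = k_t + \dots + k_p$ turns this product into $\psi^{\omega_\star,\sigma^2_{\eta \star}}_{\ell_1}(k_1+\dots+k_p)\cdots\psi^{\omega_\star,\sigma^2_{\eta \star}}_{\ell_p}(k_p)$, which is precisely (\ref{eq:psi}).

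The one subtle point is legitimizing the term-by-term integration, since the Fourier series of $f_\star$ converges only in $\mathbb{L}^2$ (we have $\{c^\star_k\}\in\ell_2$, not necessarily $\ell_1$). I would argue by truncation: for $f_N := \sum_{|n|\le N} c^\star_n \mathrm{e}^{in\cdot}$, each $\phi_{m_j}$ is, modulo $2\pi$, uniform on $[0,2\pi]$ (because $\phi_0$ is and is independent of the increments), hence $f_N(\phi_{m_j}) \to f_\star(\phi_{m_j})$ in $\mathbb{L}^2(\mathbb{P})$. Integrating out $D_p, D_{p-1}, \dots$ one at a time, and using that the convolution of the $2\pi$-periodic function $|f_\star|$ with a Gaussian density is a bounded ($2\pi$-periodic) function, shows both that $\prod_{j=0}^p f_\star(\phi_{m_j})$ is integrable and that the partial products are dominated, so one may let $N\to\infty$. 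For each fixed $N$, Fubini is immediate and the computation above is exact on the finite sum; passing to the limit — and using that the Gaussian decay $|\psi^{\omega_\star,\sigma^2_{\eta \star}}_{\ell_t}(s)| = \mathrm{e}^{-\sigma^2_{\eta \star}\ell_t s^2/2}$ makes the series on the right-hand side of (\ref{eq:psi}) absolutely convergent — gives the claim. This integrability/Fubini step is the main obstacle; the algebraic identity itself is just careful bookkeeping of indices.
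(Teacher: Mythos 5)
Your argument is correct: expanding each factor in its Fourier series, using the uniformity of $\phi_0$ to force $n_0=-(n_1+\dots+n_p)$, and factorizing over the independent block increments $D_t\sim\mathcal{N}(\ell_t\omega_\star,\ell_t\sigma^2_{\eta\star})$ yields exactly \eqref{eq:psi}; this is the natural $(p+1)$-factor generalization of the computation in Lemma~\ref{lem:autocov}, which is evidently what the authors intend by stating that the proof ``is not difficult and therefore omitted.'' Your truncation/Gaussian-smoothing justification of the term-by-term integration (using $\{c_k^\star\}\in\ell_2$ only, bounded wrapped-Gaussian convolutions, and the Gaussian decay of $\psi^{\omega_\star,\sigma^2_{\eta\star}}_{\ell}$ for absolute convergence of the right-hand side) is sound and in fact supplies a detail the paper glosses over even in Lemma~\ref{lem:autocov}.
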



\begin{lemma}
\label{lem:psi:invertible}
Let $\{z_k\}_{k=-\infty}^\infty$ be complex numbers such that $\{z_k\}_{k=-\infty}^\infty \in \ell_\infty(\mathbb{Z})$. Then, for all $\ell\ge 1$, $\{ \psi^{\omega_{\star},\sigma^2_{\eta \star}}_{\ell}(k)z_k\}_{k\in\mathbb{Z}} \in \ell_1(\mathbb{Z})$, where $\psi^{\omega_{\star},\sigma^2_{\eta \star}}_{\ell}$ is defined in \eqref{eq:characteristic}. Assume that $\omega_{\star}\notin \pi\mathbb{Q}$. If for all $\ell \ge 1$,
\[
\sum_{-\infty<k<\infty} \psi^{\omega_{\star},\sigma^2_{\eta \star}}_{\ell}(k)z_k=0\;,
\]
then we have $z_k=0$ for all $ k\in\mathbb{Z}$.
\end{lemma}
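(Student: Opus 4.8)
The plan is to prove the two assertions separately, using throughout that $\sigma^2_{\eta \star}>0$. The $\ell_1$-statement follows from the crude bound $|\psi^{\omega_{\star},\sigma^2_{\eta \star}}_{\ell}(k)\,z_k|\le\|z\|_{\infty}\,\mathrm{e}^{-\ell k^2\sigma^2_{\eta \star}/2}$ together with $\ell k^2\ge|k|$ for $\ell\ge1$, so that $\sum_{k\in\mathbb{Z}}\mathrm{e}^{-\ell k^2\sigma^2_{\eta \star}/2}\le1+2\sum_{k\ge1}\mathrm{e}^{-k\sigma^2_{\eta \star}/2}<\infty$; this also licenses the term-by-term manipulations below. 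For the main assertion I would set $w_k:=\mathrm{e}^{ik\omega_{\star}-k^2\sigma^2_{\eta \star}/2}$, so that $\psi^{\omega_{\star},\sigma^2_{\eta \star}}_{\ell}(k)=w_k^{\ell}$ and the hypothesis reads $\sum_{k\in\mathbb{Z}}z_k w_k^{\ell}=0$ for all $\ell\ge1$, and observe that $|w_k|=\mathrm{e}^{-k^2\sigma^2_{\eta \star}/2}$ is strictly decreasing in $|k|$ while $\omega_{\star}\notin\pi\mathbb{Q}$ forces $k\mapsto w_k$ to be injective (if $w_k=w_{-k}$ with $k\ne0$ then $\mathrm{e}^{2ik\omega_{\star}}=1$, i.e.\ $\omega_{\star}\in\pi\mathbb{Q}$).

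The core is an induction on $n\ge0$ showing $z_k=0$ for all $|k|\le n$. Granting this for $|k|\le n-1$ (vacuous when $n=0$), the hypothesis reduces to $\sum_{|k|\ge n}z_k w_k^{\ell}=0$; multiplying by $|w_n|^{-\ell}=\mathrm{e}^{\ell n^2\sigma^2_{\eta \star}/2}$ turns the $k=\pm n$ terms into $z_n\mathrm{e}^{i\ell n\omega_{\star}}+z_{-n}\mathrm{e}^{-i\ell n\omega_{\star}}$ (just $z_0$ when $n=0$), while every term with $|k|>n$ picks up a factor $\mathrm{e}^{-\ell(k^2-n^2)\sigma^2_{\eta \star}/2}$ with $k^2-n^2\ge1$; using $\ell(k^2-n^2)\ge(\ell-1)+(k^2-n^2)$ the remaining sum is at most $\|z\|_{\infty}\,\mathrm{e}^{-(\ell-1)\sigma^2_{\eta \star}/2}\sum_{|k|>n}\mathrm{e}^{-(k^2-n^2)\sigma^2_{\eta \star}/2}$, i.e.\ a finite constant times $\mathrm{e}^{-(\ell-1)\sigma^2_{\eta \star}/2}$, which tends to $0$. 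Hence $z_n\mathrm{e}^{i\ell n\omega_{\star}}+z_{-n}\mathrm{e}^{-i\ell n\omega_{\star}}\to0$ as $\ell\to\infty$; for $n=0$ this already yields $z_0=0$.

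To finish the step for $n\ge1$ I would invoke that $n\omega_{\star}/(2\pi)$ is irrational, so by Kronecker's theorem $\{\mathrm{e}^{i\ell n\omega_{\star}}:\ell\ge1\}$ is dense in the unit circle; taking subsequences along which $\mathrm{e}^{i\ell n\omega_{\star}}\to1$ and $\to i$ (with $\mathrm{e}^{-i\ell n\omega_{\star}}$ being the conjugate) gives $z_n+z_{-n}=0$ and $i(z_n-z_{-n})=0$, hence $z_n=z_{-n}=0$, closing the induction and proving $z_k\equiv0$. The one genuinely delicate step is precisely this last one — recovering \emph{both} coefficients $z_{\pm n}$ from the single scalar limit $z_n\mathrm{e}^{i\ell n\omega_{\star}}+z_{-n}\mathrm{e}^{-i\ell n\omega_{\star}}\to0$ — and it is exactly here that $\omega_{\star}\notin\pi\mathbb{Q}$ is indispensable: otherwise $\mathrm{e}^{in\omega_{\star}}$ and $\mathrm{e}^{-in\omega_{\star}}$ may coincide (take e.g.\ $\omega_{\star}=\pi/2$, $n=2$) and one only gets $z_n+z_{-n}=0$, so that the lemma fails (e.g.\ $z_{\pm2}=\pm1$ and $z_k=0$ otherwise).
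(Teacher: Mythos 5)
Your proof is correct and follows essentially the same route as the paper's: induction on $|k|$, rescaling by $\mathrm{e}^{\ell n^2\sigma^2_{\eta\star}/2}$ so the tail vanishes as $\ell\to\infty$, then exploiting density of $\{\mathrm{e}^{i\ell n\omega_{\star}}\}_{\ell}$ in the unit circle (from $\omega_{\star}\notin\pi\mathbb{Q}$) to extract two independent linear relations and conclude $z_{\pm n}=0$. The only cosmetic differences are that you control the tail with an explicit exponential bound where the paper invokes dominated convergence, and you pick the concrete limit points $1$ and $i$ where the paper uses two generic distinct accumulation points.
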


\noindent The proof of Theorem~\ref{TheoremIdentifiability} 4) relies on the following lemma which is a  corollary of Lemma~\ref{lem:psi:invertible}.

\begin{lemma}
\label{lem:psi:multiple:invert}
Assume that $\omega_{\star}\notin \pi\mathbb{Q}$. Let  $Z \in \ell_\infty(\mathbb{Z}^p)$ be a bounded complex sequence indexed by $(k_1,\ldots,k_p) \in \mathbb{Z}^p$, $p\ge1$, satisfying, for any $\ell_1,\cdots,\ell_p\ge 1$,
\[
\sum_{-\infty< k_1,\dots,k_p<\infty}\psi^{\omega_{\star},\sigma^2_{\eta \star}}_{\ell_1}(k_1+\dots+k_p)\dots\psi^{\omega_{\star},\sigma^2_{\eta \star}}_{\ell_p}(k_p)z_{k_1,\ldots,k_p}=0\;.
\]
Then we have $z_{k_1,\cdots,k_p}=0$ for any $k_1,\cdots,k_p$.
\end{lemma}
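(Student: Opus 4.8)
The plan is to induct on the number of indices $p$. The base case $p=1$ is exactly Lemma~\ref{lem:psi:invertible}, so I assume $p\ge 2$ and that Lemma~\ref{lem:psi:multiple:invert} is already known with $p-1$ in place of $p$.

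The decisive structural observation is that the index $k_1$ appears in only one of the factors $\psi^{\omega_{\star},\sigma^2_{\eta \star}}_{\ell_1}(k_1+\dots+k_p),\dots,\psi^{\omega_{\star},\sigma^2_{\eta \star}}_{\ell_p}(k_p)$, namely the first one, and there only through the combination $k_1+\dots+k_p$. So, first I would freeze $(k_2,\dots,k_p)$ and replace the inner summation variable $k_1\in\mathbb{Z}$ by $s:=k_1+\dots+k_p\in\mathbb{Z}$ (a bijection). Since $|\psi^{\omega_{\star},\sigma^2_{\eta \star}}_{\ell}(r)|\le \mathrm{e}^{-\sigma_{\eta \star}^2 r^2/2}$ for $\ell\ge 1$ and $Z\in\ell_\infty(\mathbb{Z}^p)$, the multiple series converges absolutely and the order of summation may be rearranged, rewriting the hypothesis as
\[
\sum_{s\in\mathbb{Z}}\psi^{\omega_{\star},\sigma^2_{\eta \star}}_{\ell_1}(s)\,w_s=0,
\quad\text{where}\quad
w_s:=\sum_{k_2,\dots,k_p}\Big(\prod_{j=2}^{p}\psi^{\omega_{\star},\sigma^2_{\eta \star}}_{\ell_j}(k_j+\dots+k_p)\Big)z_{s-(k_2+\dots+k_p),\,k_2,\dots,k_p}.
\]
The same Gaussian bounds give $\sup_{s}|w_s|\le C(\ell_2,\dots,\ell_p)\,\|Z\|_\infty<\infty$, so $\{w_s\}_{s\in\mathbb{Z}}\in\ell_\infty(\mathbb{Z})$; note also that $w_s$ does not depend on $\ell_1$. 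Hence Lemma~\ref{lem:psi:invertible} applies (this is where $\omega_{\star}\notin\pi\mathbb{Q}$ is used) and yields $w_s=0$ for every $s\in\mathbb{Z}$, for each fixed choice of $\ell_2,\dots,\ell_p\ge 1$.

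It then remains to unwind this. Fixing $s\in\mathbb{Z}$ and setting $\tilde z^{(s)}_{k_2,\dots,k_p}:=z_{s-(k_2+\dots+k_p),\,k_2,\dots,k_p}$, I observe that $\tilde z^{(s)}$ is again a bounded array indexed by $\mathbb{Z}^{p-1}$, and that the identities $w_s=0$ (for all $\ell_2,\dots,\ell_p\ge 1$) are precisely the hypotheses of Lemma~\ref{lem:psi:multiple:invert} at level $p-1$ applied to $\tilde z^{(s)}$. The induction hypothesis then gives $\tilde z^{(s)}_{k_2,\dots,k_p}=0$ for all $(k_2,\dots,k_p)$, i.e. $z_{s-(k_2+\dots+k_p),\,k_2,\dots,k_p}=0$. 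Since $s$ was arbitrary, given any $(k_1,\dots,k_p)\in\mathbb{Z}^p$ I may choose $s=k_1+\dots+k_p$ to conclude $z_{k_1,\dots,k_p}=0$, which closes the induction.

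The content of the argument is genuinely just the reduction via the substitution $s=k_1+\dots+k_p$ together with two applications of already–established results (Lemma~\ref{lem:psi:invertible} and the inductive hypothesis), so I do not expect a real obstacle. The only points needing a little care are the routine absolute–convergence bookkeeping that justifies interchanging the sums and shows $\{w_s\}\in\ell_\infty(\mathbb{Z})$, and the remark that since $w_s$ and $\tilde z^{(s)}$ depend on the frozen parameters $\ell_2,\dots,\ell_p$, these two invocations must be performed for each such choice before drawing the conclusion.
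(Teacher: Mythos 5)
Your proof is correct, and it follows exactly the route the paper intends: the paper omits the argument, remarking only that the lemma is a corollary of Lemma~\ref{lem:psi:invertible}, and your induction on $p$ via the substitution $s=k_1+\dots+k_p$ (freezing $\ell_2,\dots,\ell_p$, applying Lemma~\ref{lem:psi:invertible} in $\ell_1$ to get $w_s=0$, then applying the inductive hypothesis to $\tilde z^{(s)}$) is precisely that corollary spelled out. The absolute-convergence and boundedness checks you include (via the Gaussian bound $|\psi_\ell(r)|\le \mathrm{e}^{-\sigma_{\eta\star}^2 r^2/2}$ and the change of variables $m_j=k_j+\dots+k_p$) are the only bookkeeping needed, and you handle them correctly.
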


\section{Appendix 2: The nonparametric EM-estimate} \label{sec:Appendix2}

\noindent
{\bf Proof of Proposition \ref{NEM:prop1}.}
Under the assumption that $f$ is $2\pi$ periodic, it can be seen from
(\ref{NonparaEM}) that $\sum_{t=1}^T \int \big\{y_t - a_t f(\phi_t\, \text{mod} \, 2\pi) -
b_t\big\}^2 p_{f^{(m)}}(a_t, b_t, \phi_t| y_{1:t+\ell}) \,\rmd a_t \,\rmd b_t \,\rmd \phi_t$  needs to be minimized with respect to $f(\phi)$. By using the
density $p(a_{t}, b_{t}, \phi_{t}| y_{1:t+\ell})$ from (\ref{kernelprobdistribution}) the above expression becomes
\begin{align*} \label{}
&\sum_{t=1}^T  \sum_{i=1}^N \tilde{\omega}_t^i \,  \int_{0}^{2\pi} \!\!\int \!\!\int \, \Big(y_t -
a_t f(\phi) - b_t\Big)^{2}
\mathcal{N}\Big(a_t, b_t \big| (\tilde{a}_t^i, \tilde{b}_t^i)^T,
\tilde{\Sigma}_t^i \Big)\, K_{h}\big( (\phi-\phi_{t}^i) \mbox{ mod } 2\pi \big) \,\rmd a_t \,\rmd b_t \,\rmd \phi\\
& \qquad = \int_{0}^{2\pi} \sum_{t=1}^T  \sum_{i=1}^N  \tilde{\omega}_t^i \,  \Big(y_t^{2} +
(\tilde{S}_t^i)_{11} f(\phi)^{2} + (\tilde{S}_t^i)_{22} - 2 y_t
\tilde{a}_t^i f(\phi) - 2 y_t
\tilde{b}_t^i + 2 (\tilde{S}_t^i)_{12} f(\phi) \Big)\\
& \hspace*{10.0cm} \times K_{h}\big( (\phi-\phi_{t}^i) \mbox{ mod } 2\pi \big) \, \rmd \phi.
\end{align*}
For fixed $\phi$ minimization with respect to $f(\phi)$ now yields
\begin{equation*} \label{}
\tilde{f}^{(m+1)}(\phi) =
\frac{
\sum_{t=1}^T \sum_{i=1}^N
\tilde{\omega}_t^i \, K_{h}\big( (\phi-\phi_{t}^i) \mbox{ mod } 2\pi \big) \big\{ y_t
\tilde{a}_t^i - (\tilde{S}_t^i)_{12} \big\}
}{
\sum_{t=1}^T
\sum_{i=1}^N
\tilde{\omega}_t^i \,K_{h}\big( (\phi-\phi_{t}^i) \mbox{ mod } 2\pi \big)
(\tilde{S}_t^i)_{11} } \,.
\end{equation*}
i.e. the result. For the filter and the smoother the proofs are the same.  \hfill $\Box$

\vspace{0.5cm}

\noindent \textbf{Acknowledgements:} This research has been supported by the Deutsche Forschungsgemeinschaft under DA 187/15-1, through the Research Training Group RTG 1953 and by Heidelberg University under Frontier D.801000/08.023. It has been conducted as part of the project Labex MME-DII (ANR11-LBX-0023-01). The authors are very grateful to two anonymous referees for their helpful comments.

\section*{References}

\begin{description}
\baselineskip1.3em
\itemsep-0.04cm

\item Amblard, P. O., Brossier, J. M., and Moisan, E. (2003). Phase tracking: what do we gain from optimality? Particle filtering versus phase-locked loops. {\it Signal Processing} 83, 151-167.

\item Blasius, B., Huppert, A., and Stone, L. (1999) Complex dynamics and phase synchronization in spatially extended ecological
systems. {\it Nature} 399, 354-359.

\item Chen, R.-R., Koetter, R., Madhow, U., and Agrawal, D. (2003) Joint
noncoherent demodulation and decoding for the block fading channel: A
practical framework for approaching Shannon capacity. In  {\it IEEE
Transactions on Communications} 51, 1676-1689.

\item Clifford, G.D., Azuaje, F., and McSharry, P.E. (2006) {\it Advanced
Methods and Tools for ECG Data Analysis}. Norwood: Artech House.

\item Delyon, B., Lavielle, M., and Eric Moulines, E. (1999) Convergence of a stochastic approximation version
of the EM algorithm. {\it  Annals
of Statistics} 27, 94--128.

\item Dempster, A.P., Laird, N.M., and Rubin, D.B. (1977) Maximum Likelihood from Incomplete Data via the EM Algorithm. {\it
Journal of the Royal Statistical Society, Series B} 39, 1-38.

\item Douc, R., Capp{\'{e}}, O., and Moulines, E. (2005) Comparison of
resampling schemes for particle filtering. In  {\it Proceedings of the 4th
International Symposium on Image and Signal Processing and Analysis}, 64-69.

\item Douc, R., Moulines, E. and Stoffer, D.S. (2014) {\it Nonlinear time series: Theory, Methods and Applications with R Examples}. CRC Press.

\item Doucet, A., Gordon, N.J., and Krishnamurthy, V. (1999) Particle Filters for State Estimation of Jump Markov Linear Systems. {\it
IEEE Transactions on Signal Processing} 49, 613-624.

\item Doucet, A., Godsill, S. and Andrieu, C. (2000a) On sequential Monte Carlo sampling methods for Bayesian filtering. {\it Stat. Comput.} 10, 197-208.

\item Doucet, A., De Freitas, N., Murphy, K. and Russell, S. (2000b) Rao-Blackwellised particle filtering for dynamic Bayesian networks. In  {\it Proceedings of the Sixteenth conference on uncertainty in artificial intelligence}, 176-183.

\item Doucet, A., De Freitas, N., and Gordon, N. (ed.) (2001)
{\it Sequential Monte Carlo Methods in Practice}, Springer Verlag, New York.

\item Dubois, C. and Davy, M. (2007) Joint detection and tracking of time-varying harmonic components: a flexible Bayesian approach. {\it IEEE Trans. Audio, Speech, Language Process.} 15(4), 1283–1295.

\item Dumont, T. and Le Corff, S. (2014)
{\it Nonparametric regression on hidden $\Phi$-mixing variables: identifiability and consistency of a pseudo-likelihood based estimation procedure}. arXiv:1209.0633.

\item Engle, R.F., and Russell, J.R. (1998) Autoregressive conditional duration: A new model for irregularly spaced
transaction data. {\it Econometrica} 66, 1127-1162.

\item Fell, J., and Axmacher, N. (2011) The role of phase synchronization in memory processes. {\it Nat. Rev. Neurosci.} 12(2), 105-118.
    
\item Fort, G., and Moulines, E. (2003) Convergence of the Monte Carlo expectation maximization for curved exponential families. {\it Annals of Statistics} 31 (4), 1220-1259.

\item Godsill, S.J., Doucet, A., and West, M. (2004) Monte Carlo Smoothing
for Nonlinear Time Series. {\it Journal of
American Statistical Association} 99, 156-168.

\item Gordon, N. J., Salmond, D. J., and Smith, A. F. (1993) Novel approach to nonlinear/non-Gaussian Bayesian state estimation. {\it IEE Proceedings F-Radar and Signal Processing} 140 (2), 107-113.

\item Grossmann, A. , Kronland-Martinet, R., and Morlet, J. (1989) Reading and Understanding Continuous Wavelet Transforms. {\it Wavelets,
Time-Frequency Methods and Phase Space}, ed. J.M. Combes, Springer Verlag, Berlin.

\item Hannan, E.J. (1973) The Estimation of Frequency. {\it Journal of
Applied Probability} 10, 513-519.

\item Huang, N.E., Shen, Z., Long, S.R., Wu, M.L., Shih, H.H., Zheng, Q., Yen, N.C., Tung, C.C., and
Liu, H.H. (1998) The empirical mode decomposition
and Hilbert spectrum for nonlinear and nonstationary
time series analysis. {\it Proc. Roy. Soc.
London A} 454, 903-995.

\item H\"{u}rzeler, M., and K\"{u}nsch, H. (2001) Approximating and maximising the Likelihood for a General State-Space Model. {\it Sequential Monte Carlo Methods in Practice}, eds. A. Doucet, N. de Freitas and N. Gordon, New York: Springer.

\item Kitagawa, G. (1996) Monte Carlo filter and smoother for non-Gaussian nonlinear state space models. {\it Journal of Computational and Graphical Statistics} 5(1), 1-25.

\item Kneip, A. and Gasser, Th. (1992) Statistical Tools to Analyze Data Representing a Sample of Curves. {\it  The Annals of Statistics} 20, 1266-1305.

\item Kong, A., Liu, J., and Wong, W. (1994) Sequential imputation and Bayesian missing data problems. {\it Journal of
American Statistical Association} 89, 278-288.

\item Le Corff, S., and Fort, G. (2013) Online Expectation Maximization based algorithms for inference in hidden Markov models. {\it Electronic Journal of Statistics} 7, 763-792.

\item Lepski, O.V., Mammen, E., and Spokoiny, V.G. (1997) Optimal spatial adaptation to inhomogeneous smoothness: an approach based on kernel estimates with variable bandwidth selectors. {\it The Annals of Statistics} 25, 929-947.

\item Li, T.H. (2013) {\it Time Series with Mixed Spectra}, CRC Press.

\item Lloyd, A.L., and May, R.M. (1999) Synchronicity, chaos and population cycles: spatial coherence in an
uncertain world. {\it Trends in Ecology \& Evolution} 14, 417-418.

\item Myers, C.S. and L. R. Rabiner, L.R. (1981) A comparative study of several dynamic time-warping algorithms for connected word recognition. {\it The Bell System Technical Journal} 60, 1389-1409.

\item Olsson, J., Cappe, O., Douc, R. and Moulines, E. (2008) Sequential Monte Carlo smoothing with application to parameter estimation in nonlinear state space models. {\it Bernoulli} 14, 155-179.

\item Paraschakis, K. and Dahlhaus, R. (2012) Frequency and phase estimation in time series with quasi periodic components. {\it Journal of Time Series Analysis} 33, 13-31 .

\item Pikovsky, A.S., Rosenblum, M.G., Osipov, G.V., and Kurths, J. (1997) Phase synchronization of chaotic oscillators by external driving. {\it
Physica D} 104, 219-238.

\item Pikovsky, A., Rosenblum, M., and Kurths, J. (2001), {\it
Synchronization}. Cambridge University Press.

\item Press, W.H., Teukolsky, S.A., Vetterling, W.T., and Flannery, B.P. (1992),
{\it Numerical Recipes in C (2nd ed.)}. Cambridge University Press.

\item Quinn, B.G. and Hannan, E.J. (2001) {\em The Estimation and
Tracking of Frequency}. Cambridge University Press.

\item Rosenblum, M.G., Pikovsky, A.S., and Kurths, J. (1996) Phase Synchronization of Chaotic Oscillators. {\it Physical Review
Letters} 76, 1804-1807.

\item Schuster, A. (1897) On lunar and solar periodicities of earthquakes. {\it Proc. Roy. Soc.} 61, 455-465.

\item Shumway, R.H. and Stoffer, D.S. (1982) An approach to time series smoothing and forecasting using the EM algorithm. {\it  J. Time Series Anal.} 3, 253--264.

\item Shumway, R.H. and Stoffer, D.S. (2011) {\it Time Series Analysis and Its Applications: With R Examples (3rd ed)}.  Springer Verlag, New York.

\item Tanner, M.A. (1993) {\it Tools for Statistical Inference: Methods for the Exploration of Posterior Distributions and Likelihood Functions (2nd ed.)}.  Springer Series in Statistics, New York.

\item Tsakonas, E., Sidiropoulos, N. and Swami, A. (2008) Optimal particle filters for tracking a time-varying harmonic or chirp signal. {\it IEEE Trans. Signal Process.} 56(10), 4598-4610.

\item Wang, K. and Gasser, Th. (1997) Alignment of curves by dynamic time warping. {\it  Annals of Statistics}, 25, 1251-1276.

\item  Wei, G. C. G. and Tanner, M. A. (1990) A Monte Carlo implementation of the EM algorithm and the poor man's data augmentation algorithms. {\it Journal of the American Statistical Association} 85, 699-704.

\end{description}

\end{document}